\newtheorem{theorem}{Theorem}[section]
\newtheorem{lemma}[theorem]{Lemma}
\newtheorem{proposition}[theorem]{Proposition}
\theoremstyle{definition}
\newtheorem{remark}[theorem]{Remark}
\newtheorem{example}[theorem]{Example}
\newtheorem{notation}[theorem]{Notation}
\numberwithin{equation}{theorem}
\def\fraka{{\mathfrak a}}
\def\frakb{{\mathfrak b}}
\def\frakm{{\mathfrak m}}
\def\frakn{{\mathfrak n}}
\def\frakp{{\mathfrak p}}
\def\frakM{{\mathfrak M}}
\def\frakP{{\mathfrak P}}
\def\bsa{{\boldsymbol a}}
\def\bsa{{\boldsymbol a}}
\def\bsm{{\boldsymbol m}}
\def\bsn{{\boldsymbol n}}
\def\bsx{{\boldsymbol x}}
\def\bsy{{\boldsymbol y}}
\def\bsz{{\boldsymbol z}}
\def\bszero{{\boldsymbol 0}}
\def\bsgamma{{\boldsymbol\gamma}}
\def\calO{{\mathcal O}}
\def\calR{{\mathcal R}}
\def\AA{{\mathbb A}}
\def\CC{{\mathbb C}}
\def\NN{{\mathbb N}}
\def\PP{{\mathbb P}}
\def\QQ{{\mathbb Q}}
\def\ZZ{{\mathbb Z}}
\def\d{\Delta}
\def\nat{\natural}
\def\phi{\varphi}
\def\bar{\overline}
\def\hat{\widehat}
\def\seg{{\scriptstyle{\#}}}
\def\to{\longrightarrow}
\def\mapsto{\longmapsto}
\def\ge{\geqslant}
\def\le{\leqslant}
\def\coker{\operatorname{coker}}
\def\Cl{\operatorname{Cl}}
\def\gr{\operatorname{gr}}
\def\Hom{\operatorname{Hom}}
\def\Ext{\operatorname{Ext}}
\def\Proj{\operatorname{Proj}}
\def\rank{\operatorname{rank}}
\def\Spec{\operatorname{Spec}}
\def\HH{\underline{\Hom}}
\def\HE{\underline{\Ext}}
\begin{document}
\title[Multigraded rings]{Multigraded rings, diagonal subalgebras, and rational singularities}

\author{Kazuhiko Kurano}
\address{Department of Mathematics, Meiji University, Higashimita 1-1-1, Tama-ku,\newline Kawasaki-shi 214-8571, Japan}
\email{kurano@math.meiji.ac.jp} 

\author{Ei-ichi Sato}
\address{Department of Mathematics, Kyushu University, Hakozaki 6-10-1, Higashi-ku,\newline Fukuoka-city 812-8581, Japan}
\email{esato@math.kyushu-u.ac.jp}

\author{Anurag K. Singh}
\address{Department of Mathematics, University of Utah, 155 South 1400 East, \newline Salt Lake City, UT 84112, USA}
\email{singh@math.utah.edu}

\author{Kei-ichi Watanabe}
\address{Department of Mathematics, Nihon University, Sakura-Josui 3-25-40, Setagaya,\newline Tokyo 156-8550, Japan}
\email{watanabe@math.chs.nihon-u.ac.jp}

\thanks{A.K.S. was supported by NSF grants DMS 0300600 and DMS 0600819.}

\subjclass[2000]{Primary 13A02; Secondary 13A35, 13H10, 14B15}

\dedicatory{To Paul Roberts}

\maketitle

\section{Introduction}

We study the properties of F-rationality and F-regularity in multigraded rings and their diagonal subalgebras. The main focus is on diagonal subalgebras of bigraded rings: these constitute an interesting class of rings since they arise naturally as homogeneous coordinate rings of blow-ups of projective varieties.

Let $X$ be a projective variety over a field $K$, with homogeneous coordinate ring $A$. Let $\fraka\subset A$ be a homogeneous ideal, and $V\subset X$ the closed subvariety defined by $\fraka$. For $g$ an integer, we use $\fraka_g$ to denote the $K$-vector space consisting of homogeneous elements of $\fraka$ of degree $g$. If $g\gg0$, then $\fraka_g$ defines a very ample complete linear system on the blow-up of $X$ along $V$, and hence $K[\fraka_g]$ is a homogeneous coordinate ring for this blow-up. Since the ideals $\fraka^h$ define the same subvariety $V$, the rings $K[(\fraka^h)_g]$ are homogeneous coordinate ring for the blow-up provided $g\gg h>0$.

Suppose that $A$ is a standard $\NN$-graded $K$-algebra, and consider the $\NN^2$-grading on the Rees algebra $A[\fraka t]$, where $\deg rt^j=(i,j)$ for $r\in A_i$. The connection with diagonal subalgebras stems from the fact that if $\fraka^h$ is generated by elements of degree less than or equal to $g$, then
\[
K[(\fraka^h)_g]\cong\bigoplus_{k\ge0}{A[\fraka t]}_{(gk,hk)}\,.
\]
Using $\d=(g,h)\ZZ$ to denote the $(g,h)$-\emph{diagonal} in $\ZZ^2$, the \emph{diagonal subalgebra} $A{[\fraka t]}_\d=\oplus_k{A[\fraka t]}_{(gk,hk)}$ is a homogeneous coordinate ring for the blow-up of $\Proj A$ along the subvariety defined by $\fraka$, whenever $g\gg h>0$.

The papers \cite{GG,GGH,GGP,Trung} use diagonal subalgebras in studying blow-ups of projective space at finite sets of points. For $A$ a polynomial ring and $\fraka$ a homogeneous ideal, the ring theoretic properties of $K[\fraka_g]$ are studied by Simis, Trung, and Valla in \cite{STV} by realizing $K[\fraka_g]$ as a diagonal subalgebra of the Rees algebra $A[\fraka t]$. In particular, they determine when $K[\fraka_g]$ is Cohen-Macaulay for $\fraka$ a complete intersection ideal generated by forms of equal degree, and also for $\fraka$ the ideal of maximal minors of a generic matrix. Some of their results are extended by Conca, Herzog, Trung, and Valla as in the following theorem:

\begin{theorem}\cite[Theorem 4.6]{CHTV}
Let $K[x_1,\dots,x_m]$ be a polynomial ring over a field, and let $\fraka$ be a complete intersection ideal minimally generated by forms of degrees $d_1,\dots,d_r$. Fix positive integers $g$ and $h$ with $g/h>d=\max\{d_1,\dots,d_r\}$.

Then $K[(\fraka^h)_g]$ is Cohen-Macaulay if and only if $g>(h-1)d-m+\sum_{j=1}^r d_j$.
\end{theorem}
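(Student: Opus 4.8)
The plan is to realize $K[(\fraka^h)_g]$ as a diagonal subalgebra and translate the Cohen--Macaulay property into the vanishing of certain bigraded local cohomology modules of the Rees algebra. Write $R=K[x_1,\dots,x_m]$, let $B=R[\fraka t]$ carry the $\NN^2$-grading of the introduction, and set $\d=(g,h)\ZZ$. Since $\fraka$ is generated in degrees at most $d$ and $g/h>d$, the ideal $\fraka^h$ is generated in degrees at most $hd<g$, so $K[(\fraka^h)_g]\cong B_\d$ as explained above; this ring is a homogeneous coordinate ring of the blow-up $Y=\Proj B_\d$ of $\PP^{m-1}$ along $V(\fraka)$, a variety of dimension $m-1$, and $B_\d=\bigoplus_{k\ge0}H^0(Y,\calO_Y(gk,hk))$. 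By the Serre--Grothendieck correspondence, $B_\d$ is Cohen--Macaulay if and only if the intermediate sheaf cohomology $H^i(Y,\calO_Y(gk,hk))$ vanishes for all $k\in\ZZ$ and all $0<i<m-1$; equivalently, writing $\frakP=\frakm B$ and $\mathfrak Q=\fraka t B$ for the two irrelevant ideals and $\frakM=\frakP+\mathfrak Q$, one needs
\[
[H^j_{\frakP\cap\mathfrak Q}(B)]_{(gk,hk)}=0\qquad\text{for all }k\text{ and all }2\le j\le m-1.
\]
(The extremal indices $j=0,1$ and $j=m,m+1$ correspond to saturation and top-cohomology conditions, which hold under $g/h>d$.)

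Next I would split this cohomology. Since $B$ is Cohen--Macaulay of dimension $m+1$, we have $H^j_{\frakM}(B)=0$ for $j\ne m+1$, so the Mayer--Vietoris sequence for the pair $(\frakP,\mathfrak Q)$ yields a direct sum decomposition $H^j_{\frakP\cap\mathfrak Q}(B)\cong H^j_{\frakP}(B)\oplus H^j_{\mathfrak Q}(B)$ for $j\le m-1$. Computing the first summand with the \v{C}ech complex on $x_1,\dots,x_m$ and using $B=\bigoplus_k\fraka^k t^k$ as an $R$-module gives $H^j_{\frakP}(B)=\bigoplus_k H^j_{\frakm}(\fraka^k)\,t^k$. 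Because $\fraka$ is a complete intersection, every $R/\fraka^k$ is Cohen--Macaulay of dimension $m-r$, so in the range $2\le j\le m-1$ the module $H^j_{\frakm}(\fraka^k)$ vanishes except at $j=m-r+1$ (for $r\ge2$), where it is $H^{m-r}_{\frakm}(R/\fraka^k)$. The key numerical input is the $a$-invariant of $R/\fraka^n$: using $\gr_\fraka R\cong(R/\fraka)[T_1,\dots,T_r]$ with $\deg T_j=d_j$, the filtration of $R/\fraka^n$ with quotients $\fraka^i/\fraka^{i+1}\cong\bigoplus_{|c|=i}(R/\fraka)(-\textstyle\sum c_jd_j)$ gives
\[
a(R/\fraka^n)=(n-1)d+\sum_{j=1}^r d_j-m.
\]
Reading off the diagonal, $[H^{m-r+1}_{\frakP}(B)]_{(gk,hk)}=[H^{m-r}_{\frakm}(R/\fraka^{hk})]_{gk}$, which vanishes exactly when $gk>a(R/\fraka^{hk})$ for all $k\ge1$; since $g-hd>0$ the binding case is $k=1$, and this reads precisely $g>(h-1)d-m+\sum_jd_j$. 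As this summand sits inside $H^{m-r+1}_{\frakP\cap\mathfrak Q}(B)$ as a direct summand, its nonvanishing when the inequality fails already obstructs the Cohen--Macaulay property, giving the reverse implication.

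It remains to dispatch the second summand $H^j_{\mathfrak Q}(B)$ for $2\le j\le m-1$, and here I expect the main difficulty. Since $\height\mathfrak Q=1$ and $\mathfrak Q$ is generated up to radical by the $r$ elements $f_jt$, these modules vanish outside $1\le j\le r$, so one must show $[H^j_{\mathfrak Q}(B)]_{(gk,hk)}=0$ for $2\le j\le r$. I would compute them from the Eagon--Northcott resolution of $B=\mathrm{Sym}(\fraka)$ over $R[T_1,\dots,T_r]$ attached to the matrix $\left(\begin{smallmatrix}f_1&\cdots&f_r\\T_1&\cdots&T_r\end{smallmatrix}\right)$, track the bidegrees carefully, and show that the hypothesis $g/h>d$ (the positivity making $\calO_Y(g,h)$ very ample) forces these diagonal components to vanish, so the $\mathfrak Q$-part imposes no further constraint. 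Combining the two computations yields both implications of the theorem; the delicate points are the bigraded bookkeeping in this last step and the verification of the saturation conditions at $j=0,1$, both of which rely only on $g/h>d$ rather than on the sharp inequality.
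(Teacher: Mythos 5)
Your overall architecture is sound and, as it happens, is exactly the architecture of the one proof this paper does contain of a statement of this kind: the paper itself quotes the theorem from [CHTV] without proof, but Theorem~\ref{thm:rigidity} is the in-paper analogue (equal degrees $d_j=k$, Cohen--Macaulay base, regraded so that $\deg z_it=(0,1)$; translating back to the standard grading, the bound $g>a+ks-k$ there becomes $g>(h-1)k-m+sk$, i.e.\ your inequality). Your computation of $a(R/\fraka^n)=(n-1)d+\sum_jd_j-m$ via the filtration with quotients $\fraka^i/\fraka^{i+1}\cong\bigoplus_{|c|=i}(R/\fraka)(-\sum c_jd_j)$ is correct and is precisely Lemma~\ref{lem:rigidity}(2)(3) generalized to unequal degrees, and the extraction of the sharp bound from the binding case $k=1$ is right. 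However, there is a genuine error in your step 2: the parenthetical claim that the extremal indices $j=0,1$ ``hold under $g/h>d$'' is false, and in a case covered by the theorem. When $r=m$ (so $\fraka$ is $\frakm$-primary), $V(\fraka)$ is empty in $\PP^{m-1}$, the blow-up $Y$ is $\PP^{m-1}$ itself, and \emph{all} your intermediate sheaf cohomology conditions hold vacuously; yet the theorem is not vacuous there. Concretely, take $m=2$, $\fraka=(x^3,y^3)\subset K[x,y]$, $g=4$, $h=1$: then $g/h=4>d=3$, your middle range $2\le j\le m-1$ is empty, but $K[\fraka_4]=K[x^4,x^3y,xy^3,y^4]$ is Macaulay's classical depth-one example. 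The obstruction sits at $j=m-r+1=1$, exactly the saturation condition you declared automatic. The repair is to note that your own $\frakP$-computation places the obstruction uniformly at $j=m-r+1$ for every $r\ge2$ (via $H^1_\frakm(\fraka^n)\cong H^0_\frakm(R/\fraka^n)$ when $r=m$), and to prove, rather than assert, the comparison between $H^j_\frakm(B_\d)$ and $[H^j_{\frakP\cap\frakq}(B)]_\d$ including the $H^0,H^1$ level --- this is the content of the Section~3 machinery in [CHTV], where $\frakq=\fraka tB$.

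The second incomplete step is the one you flag yourself: the vanishing of $[H^j_{\frakq}(B)]_\d$ for $2\le j\le r$ (and of the diagonal pieces of $H^1_\frakq(B)$, which your accounting also needs). This is genuinely required for the ``if'' direction and cannot be waved away, but your proposed method does work, and the paper's proof of Theorem~\ref{thm:rigidity} shows exactly how: present $B$ as $T/I_2\!\left(\begin{smallmatrix}f_1&\dots&f_r\cr T_1&\dots&T_r\end{smallmatrix}\right)$ over $T=R[T_1,\dots,T_r]$, resolve by the Eagon--Northcott complex, run the spectral sequence $H^p(H^q_\frakn(F^\bullet_\d))\Rightarrow H^{p+q}_\frakm(\calR_\d)$, decompose $H^q_\frakn$ by K\"unneth, and kill the $H^\bullet_\frakb$-components on the diagonal by the degree estimate (in the paper's grading, no integer $\ell$ satisfies $0\le\ell\le-1/h$; with the standard grading $\deg T_j=(d_j,1)$ the same estimate is where the hypothesis $g/h>d$ enters). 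Note that the paper's route thereby bypasses your Mayer--Vietoris split of $H^\bullet_{\frakP\cap\frakq}(B)$ entirely and computes $H^i_\frakm(\calR_\d)={H^i_\fraka(\calR)}_\d$ directly in the range $i\le d-1$, which also dissolves the $j=0,1$ difficulty above, since the index $m-r+1$ needs no special-casing there. So: right strategy, correct numerics, but as submitted the proof has one false assertion (the extremal indices, fatal when $r=m$) and one unproven key vanishing; both are fixable by the Eagon--Northcott/K\"unneth argument of Theorem~\ref{thm:rigidity}.
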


When $A$ is a polynomial ring and $\fraka$ an ideal for which $A[\fraka t]$ is Cohen-Macaulay, Lavila-Vidal \cite[Theorem~4.5]{Olga:MM} proved that the diagonal subalgebras $K[(\fraka^h)_g]$ are Cohen-Macaulay for $g\gg h\gg 0$, thereby settling a conjecture from \cite{CHTV}. In \cite{CH} Cutkosky and Herzog obtain affirmative answers regarding the existence of a constant $c$ such that $K[(\fraka^h)_g]$ is Cohen-Macaulay whenever $g\ge ch$. For more work on the Cohen-Macaulay and Gorenstein properties of diagonal subalgebras, see \cite{HHR, Hyry:TAMS, Olga:thesis}, and \cite{LVZ}.

As a motivating example for some of the results of this paper, consider a polynomial ring $A=K[x_1,\dots,x_m]$ and an ideal $\fraka=(z_1,z_2)$ generated by relatively prime forms $z_1$ and $z_2$ of degree $d$. Setting $\d=(d+1,1)\ZZ$, the diagonal subalgebra ${A[\fraka t]}_\d$ is a homogeneous coordinate ring for the blow-up of $\Proj A=\PP^{m-1}$ along the subvariety defined by $\fraka$. The Rees algebra $A[\fraka t]$ has a presentation
\[
\calR= K[x_1,\dots,x_m, y_1,y_2]/(y_2z_1-y_1z_2)\,,
\]
where $\deg x_i = (1,0)$ and $\deg y_j = (d,1)$, and consequently $\calR_\d$ is the subalgebra of $\calR$ generated by the elements $x_iy_j$. When $K$ has characteristic zero and $z_1$ and $z_2$ are general forms of degree $d$, the results of Section~\ref{section:hypersurfaces} imply that $\calR_\d$ has rational singularities if and only if $d\le m$, and that it is of F-regular type if and only if $d<m$. As a consequence, we obtain large families of rings of the form $\calR_\d$, standard graded over a field, which have rational singularities, but which are not of F-regular type.

It is worth pointing out that if $\calR$ is an $\NN^2$-graded ring over an infinite field $\calR_{(0,0)}=K$, and $\d=(g,h)\ZZ$ for coprime positive integers $g$ and $h$, then $\calR_\d$ is the ring of invariants of the torus $K^*$ acting on $\calR$ via
\[
\lambda\colon r\mapsto\lambda^{hi-gj}r\qquad\text{ where }\lambda \in K^*\text{ and }r\in \calR_{(i,j)}\,.
\]
Consequently there exist torus actions on hypersurfaces for which the rings of invariants have rational singularities but are not of F-regular type.

In Section~\ref{section:classgroup} we use diagonal subalgebras to construct standard graded normal rings $R$, with isolated singularities, for which ${H^2_\frakm(R)}_0=0$ and ${H^2_\frakm(R)}_1\neq0$. If $S$ is the localization of such a ring $R$ at its homogeneous maximal ideal, then, by Danilov's results, the divisor class group of $S$ is a finitely generated abelian group, though $S$ does not have a discrete divisor class group. Such rings $R$ are also of interest in view of the results of \cite{RSS}, where it is proved that the image of~${H^2_\frakm(R)}_0$ in $H^2_\frakm(R^+)$ is annihilated by elements of $R^+$ of arbitrarily small positive degree; here $R^+$ denotes the absolute integral closure of $R$. A corresponding result for ${H^2_\frakm(R)}_1$ is not known at this point, and the rings constructed in Section~\ref{section:classgroup} constitute interesting test cases.

Section~\ref{section:prelim} summarizes some notation and conventions for multigraded rings and modules. In Section~\ref{section:hypersurfaces} we carry out an analysis of diagonal subalgebras of bigraded hypersurfaces; this uses results on rational singularities and F-regular rings proved in Sections~\ref{section:rational} and~\ref{section:Freg} respectively.

The authors would like to thank Shiro Goto and Ken-ichi Yoshida
for their valuable comments.

\section{Preliminaries}
\label{section:prelim}

In this section, we provide a brief treatment of multigraded rings and modules; see \cite{GW1,GW2,HHR}, and \cite{HIO} for further details.

By an $\NN^r$-\emph{graded ring} we mean a ring
\[
\calR=\bigoplus_{\bsn\in\NN^r}\calR_\bsn\,,
\]
which is finitely generated over the subring $\calR_\bszero$. If $(\calR_\bszero,\frakm)$ is a local ring, then $\calR$ has a unique homogeneous maximal ideal $\frakM=\frakm \calR+\calR_+$, where $\calR_+=\oplus_{\bsn\neq\bszero}\calR_\bsn$.

For $\bsm=(m_1,\dots,m_r)$ and $\bsn=(n_1,\dots,n_r)$ in $\ZZ^r$, we say $\bsn>\bsm$ (resp. $\bsn\ge\bsm$) if $n_i>m_i$ (resp. $n_i\ge m_i$) for each $i$.

Let $M$ be a $\ZZ^r$-graded $\calR$-module. For $\bsm\in\ZZ^r$, we set
\[
M_{\ge\bsm}=\bigoplus_{\bsn\ge\bsm}M_\bsn\,,
\]
which is a $\ZZ^r$-graded submodule of $M$. One writes $M(\bsm)$ for the $\ZZ^r$-graded $\calR$-module with shifted grading ${[M(\bsm)]}_\bsn=M_{\bsm+\bsn}$ for each $\bsn\in\ZZ^r$.

Let $M$ and $N$ be $\ZZ^r$-graded $\calR$-modules. Then $\HH_\calR(M,N)$ is the $\ZZ^r$-graded module with ${[\HH_\calR(M,N)]}_\bsn$ being the abelian group consisting of degree preserving $\calR$-linear homomorphisms from $M$ to $N(\bsn)$. 

The functor $\HE_\calR^i(M,-)$ is the $i$-th derived functor of $\HH_\calR(M,-)$ in the category of $\ZZ^r$-graded $\calR$-modules. When $M$ is finitely generated, $\HE_\calR^i(M,N)$ and $\Ext_\calR^i(M,N)$ agree as underlying $\calR$-modules. For a homogeneous ideal $\fraka$ of $\calR$, the local cohomology modules of $M$ with support in $\fraka$ are the $\ZZ^r$-graded modules
\[
H^i_\fraka(M)=\varinjlim_n\,\HE^i_\calR(\calR/\fraka^n,M)\,.
\]

Let $\phi\colon\ZZ^r\to\ZZ^s$ be a homomorphism of abelian groups satisfying $\phi(\NN^r)\subseteq\NN^s$. We write $\calR^\phi$ for the ring $\calR$ with the $\NN^s$-grading where
\[
{[\calR^\phi]}_\bsn=\bigoplus_{\phi(\bsm)=\bsn}\calR_\bsm\,.
\]
If $M$ is a $\ZZ^r$-graded $\calR$-module, then $M^\phi$ is the $\ZZ^s$-graded $\calR^\phi$-module with 
\[
{[M^\phi]}_\bsn=\bigoplus_{\phi(\bsm)=\bsn}M_\bsm\,.
\]
The change of grading functor $(-)^\phi$ is exact; by \cite[Lemma~1.1]{HHR} one has
\[
H^i_\frakM(M)^\phi=H^i_{\frakM^\phi}(M^\phi)\,.
\]
Consider the projections $\phi_i\colon\ZZ^r\to\ZZ$ with $\phi_i(m_1,\dots,m_r)=m_i$, and set
\[
a(\calR^{\phi_i})=\max\left\{a\in\ZZ\mid\left[H^{\dim\calR}_\frakM(\calR)^{\phi_i}\right]_a\neq0\right\};
\]
this is the $a$-invariant of the $\NN$-graded ring $\calR^{\phi_i}$ in the sense of Goto and Watanabe \cite{GW1}. As in \cite{HHR}, the \emph{multigraded $\bsa$-invariant} of $\calR$ is 
\[
\bsa(\calR)=\big(a(\calR^{\phi_1}),\dots,a(\calR^{\phi_r})\big)\,.
\]

Let $\calR$ be a $\ZZ^2$-graded ring and let $g,h$ be positive integers. The subgroup $\d=(g,h)\ZZ$ is a \emph{diagonal} in $\ZZ^2$, and the corresponding \emph{diagonal subalgebra} of $\calR$ is
\[
\calR_\d=\bigoplus_{k\in\ZZ}\calR_{(gk,hk)}\,.
\]
Similarly, if $M$ is a $\ZZ^2$-graded $\calR$-module, we set
\[
M_\d=\bigoplus_{k\in\ZZ}M_{(gk,hk)}\,,
\]
which is a $\ZZ$-graded module over the $\ZZ$-graded ring $\calR_\d$.

\begin{lemma}
\label{lemma:segre}
Let $A$ and $B$ be $\NN$-graded normal rings, finitely generated over a field $A_0=K=B_0$. Set $T=A\otimes_KB$. Let $g$ and $h$ be positive integers and set $\d=(g,h)\ZZ$. Let $\fraka$, $\frakb$, and $\frakm$ denote the homogeneous maximal ideals of $A$, $B$, and $T_\d$ respectively. Then, for each $q\ge0$ and $i,j,k\in\ZZ$, one has
\begin{multline*}
{H^q_\frakm\big({T(i,j)_\d}\big)}_k=\big(A_{i+gk}\otimes{H^q_\frakb(B)}_{j+hk}\big)\oplus\big({H^q_\fraka(A)}_{i+gk}\otimes B_{j+hk}\big)\,\oplus\\
\bigoplus_{q_1+q_2=q+1}\big({H^{q_1}_\fraka(A)}_{i+gk}\otimes{H^{q_2}_\frakb(B)}_{j+hk}\big)\,.
\end{multline*}
\end{lemma}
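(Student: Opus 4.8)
\emph{Proof proposal.} The plan is to embed the diagonal subalgebra into the $\NN^2$-graded ring $T=A\otimes_KB$, where $T_{(m,n)}=A_m\otimes_KB_n$, so that the degree $k$ piece of $T(i,j)_\d$ is $A_{i+gk}\otimes_KB_{j+hk}$. Write $\frakn=\fraka T+\frakb T=T_+$ for the bigraded irrelevant ideal and $\frakq=\fraka T\cap\frakb T$. The three summands in the asserted formula point to a Mayer--Vietoris mechanism for the pair $\fraka T,\frakb T$: the first two terms should come from $H^q_{\fraka T}(T)$ and $H^q_{\frakb T}(T)$, while the shift $q_1+q_2=q+1$ in the third term is the signature of a connecting homomorphism into $H^{q+1}_\frakn(T)$. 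The whole computation will be done over the field $K$, where Künneth carries no $\operatorname{Tor}$ terms, and the final answer will be read off along the $(g,h)$-diagonal.

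First I would record the Künneth inputs. By flat base change along $K\to B$ one gets $H^q_{\fraka T}(T)=H^q_\fraka(A)\otimes_KB$, and symmetrically $H^q_{\frakb T}(T)=A\otimes_KH^q_\frakb(B)$, compatibly with the bigrading and with the twist by $(i,j)$. Choosing homogeneous generators of $\fraka$ and of $\frakb$, the \v{C}ech complex of $T$ on the combined generators of $\frakn$ is the total complex of $C_A^\bullet\otimes_KC_B^\bullet$, where $C_A^\bullet$ and $C_B^\bullet$ compute $H^*_\fraka(A)$ and $H^*_\frakb(B)$. Since $K$ is a field, the Künneth theorem gives
\[
H^q_\frakn(T)=\bigoplus_{q_1+q_2=q}H^{q_1}_\fraka(A)\otimes_KH^{q_2}_\frakb(B)\,.
\]

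Next I would feed these into the Mayer--Vietoris sequence
\[
\cdots\to H^q_\frakn(T)\xrightarrow{\ \alpha_q\ }H^q_{\fraka T}(T)\oplus H^q_{\frakb T}(T)\to H^q_\frakq(T)\to H^{q+1}_\frakn(T)\to\cdots\,.
\]
The key simplification is that $\alpha_q=0$. Tracing $\alpha_q$ through the \v{C}ech models, its two components are induced by the augmentations $C_B^\bullet\to B$ and $C_A^\bullet\to A$, so under the Künneth decomposition they are supported on the summands $H^q_\fraka(A)\otimes H^0_\frakb(B)$ and $H^0_\fraka(A)\otimes H^q_\frakb(B)$. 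Here normality enters: since $A$ and $B$ are normal, hence reduced, one has $H^0_\fraka(A)=0=H^0_\frakb(B)$ in positive dimension (the degenerate cases $A=K$ or $B=K$ being checked directly), whence $\alpha_q=0$ for all $q$. The long exact sequence then breaks into short exact sequences
\[
0\to\big(H^q_\fraka(A)\otimes_KB\big)\oplus\big(A\otimes_KH^q_\frakb(B)\big)\to H^q_\frakq(T)\to\bigoplus_{q_1+q_2=q+1}H^{q_1}_\fraka(A)\otimes_KH^{q_2}_\frakb(B)\to0\,,
\]
which, being sequences of $K$-vector spaces, split; taking the graded piece of bidegree $(i+gk,\,j+hk)$ reproduces exactly the right-hand side of the lemma.

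What remains, and what I expect to be the main obstacle, is the identification
\[
H^q_\frakm\big(T(i,j)_\d\big)=\big[H^q_\frakq\big(T(i,j)\big)\big]_\d\,.
\]
The subtlety is that $\frakq$ is generally \emph{not} generated by elements of the subring $T_\d$ (its generators $a\otimes b$ sit in bidegrees off the diagonal), so one cannot naively compare \v{C}ech complexes. I would justify this either (i) algebraically, by verifying $\sqrt{\frakm T}=\sqrt{\frakq}$ and using the change-of-grading formalism of \cite[Lemma~1.1]{HHR} to pass between the $\ZZ^2$-graded local cohomology of $T$ along the diagonal and the $\ZZ$-graded local cohomology of $T_\d$; or (ii) geometrically, using that $T_\d$ is the Segre product $A^{(g)}\#B^{(h)}$, so that $\Proj T_\d\cong\Proj A\times_K\Proj B$ with the degree $k$ twist restricting to $\calO_{\Proj A}(i+gk)\boxtimes\calO_{\Proj B}(j+hk)$, and then invoking Künneth for coherent cohomology together with the standard comparison between graded local cohomology and sheaf cohomology on $\Proj$. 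In either approach the low degrees $q=0,1$ must be treated separately, via the exact sequence relating a graded ring, its sections on $\Proj$, and $H^0_\frakm$, $H^1_\frakm$.
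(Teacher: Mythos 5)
Your proposal is correct, but it takes a genuinely different route from the paper's. The paper's proof is a two-line reduction: it writes ${T(i,j)}_\d$ as the Segre product $A^{(g,i)}\,\seg\,B^{(h,j)}$ of Veronese modules over the Veronese subrings $A^{(g)}$ and $B^{(h)}$, notes that $A^{(g)}_+A$ and $B^{(h)}_+B$ are primary to $\fraka$ and $\frakb$, and quotes the K\"unneth formula for local cohomology of Segre products \cite[Theorem~4.1.5]{GW1}, which is precisely the displayed decomposition. You instead reprove that K\"unneth formula from scratch: flat base change for $H^q_{\fraka T}(T)$ and $H^q_{\frakb T}(T)$, the tensor decomposition of the \v{C}ech complex of $\frakn=\fraka T+\frakb T$, and Mayer--Vietoris for the pair $(\fraka T,\frakb T)$, with $\alpha_q=0$ because its components factor through $H^q_\fraka(A)\otimes H^0_\frakb(B)$ and $H^0_\fraka(A)\otimes H^q_\frakb(B)$, which vanish since normal rings of positive dimension have depth at least one at the irrelevant ideal; this is all correct, and you do not even need the resulting short exact sequences to split globally, since the lemma only requires the graded pieces, where exactness of vector spaces suffices. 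What your approach buys is self-containedness; what the paper's buys is brevity, outsourcing the homological content to Goto--Watanabe.

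Two small repairs to your final comparison step. First, \cite[Lemma~1.1]{HHR} does not literally apply: $(-)_\d$ is not a change of grading along a group homomorphism $\ZZ^2\to\ZZ^s$ preserving positivity, since $\ZZ^2/\d\cong\ZZ\oplus\ZZ/\gcd(g,h)$ has torsion in general. The clean algebraic justification of $H^q_\frakm\big({T(i,j)}_\d\big)=\big[H^q_\frakq\big(T(i,j)\big)\big]_\d$ is that $T(i,j)$ decomposes as a $T_\d$-module into direct summands indexed by the cosets of $\d$ in $\ZZ^2$, with ${T(i,j)}_\d$ the zero-coset summand; local cohomology commutes with direct sums and is independent of the ambient ring, so $H^q_\frakm\big({T(i,j)}_\d\big)$ is the $\d$-part of $H^q_{\frakm T}\big(T(i,j)\big)$, and your (correct) verification that $\sqrt{\frakm T}=\sqrt{\frakq}$ finishes the identification. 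Second, the caveat about handling $q=0,1$ separately is only needed for your sheaf-theoretic alternative (ii); the algebraic route just described requires no case division, beyond the degenerate cases $A=K$ or $B=K$ that you already flag.
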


\begin{proof}
Let $A^{(g)}$ and $B^{(h)}$ denote the respective Veronese subrings of $A$ and $B$. Set
\[
A^{(g,i)}=\bigoplus_{k\in\ZZ} A_{i+gk}\qquad\text{and}\qquad B^{(h,j)}=\bigoplus_{k\in\ZZ} B_{j+hk}\,,
\]
which are graded $A^{(g)}$ and $B^{(h)}$ modules respectively. Using $\seg$ for the Segre product,
\[
{T(i,j)}_\d=\bigoplus_{k\in\ZZ}A_{i+gk}\otimes_KB_{j+hk}=A^{(g,i)}\,\seg\,B^{(h,j)}\,.
\]
The ideal $A^{(g)}_+A$ is $\fraka$-primary; likewise, $B^{(h)}_+B$ is $\frakb$-primary. The K\"unneth formula for local cohomology, \cite[Theorem~4.1.5]{GW1}, now gives the desired result.
\end{proof}

\begin{notation}
We use bold letters to denote lists of elements, e.g., $\bsz=z_1,\dots,z_s$ and $\bsgamma=\gamma_1,\dots,\gamma_s$.
\end{notation}

\section{Diagonal subalgebras of bigraded hypersurfaces}
\label{section:hypersurfaces}

We prove the following theorem about diagonal subalgebras of $\NN^2$-graded hypersurfaces. The proof uses results proved later in Sections~\ref{section:rational} and~\ref{section:Freg}.

\begin{theorem}
\label{theorem:hypersurface}
Let $K$ be a field, let $m,n$ be integers with $m,n\ge 2$, and let
\[
\calR=K[x_1,\dots,x_m,y_1,\dots,y_n]/(f)
\]
be a normal $\NN^2$-graded hypersurface where $\deg x_i=(1,0)$, $\deg y_j=(0,1)$, and $\deg f=(d,e)>(0,0)$. For positive integers $g$ and $h$, set $\d=(g,h)\ZZ$. Then:
\begin{enumerate}
\item The ring $\calR_\d$ is Cohen-Macaulay if and only if $\lfloor(d-m)/g\rfloor<e/h$ and $\lfloor(e-n)/h\rfloor<d/g$. In particular, if $d<m$ and $e<n$, then $\calR_\d$ is Cohen-Macaulay for each diagonal $\d$.

\item The graded canonical module of $\calR_\d$ is ${\calR(d-m,e-n)}_\d$. Hence $\calR_\d$ is Gorenstein if and only if $(d-m)/g=(e-n)/h$, and this is an integer.
\end{enumerate}
If $K$ has characteristic zero, and $f$ is a generic polynomial of degree $(d,e)$, then:
\begin{enumerate}
\setcounter{enumi}{2}
\item The ring $\calR_\d$ has rational singularities if and only if it is Cohen-Macaulay and $d<m$ or $e<n$.

\item The ring $\calR_\d$ is of F-regular type if and only if $d<m$ and $e<n$.
\end{enumerate}
\end{theorem}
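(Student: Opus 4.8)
The plan is to reduce everything to the polynomial ring $S=K[x_1,\dots,x_m,y_1,\dots,y_n]=A\otimes_KB$, where $A=K[x_1,\dots,x_m]$ and $B=K[y_1,\dots,y_n]$, and to exploit the exact sequence
\[
0\to S(-d,-e)\xrightarrow{f}S\to\calR\to 0.
\]
Applying the exact functor $(-)_\d$ and then $H^\bullet_\frakm(-)$ yields a long exact sequence linking the local cohomology of $\calR_\d$ to that of $S(-d,-e)_\d$ and $S_\d$, both of which I would compute from Lemma~\ref{lemma:segre}. Since $A$ and $B$ are polynomial rings, $H^q_\fraka(A)$ and $H^q_\frakb(B)$ are concentrated in degrees $m$ and $n$ respectively. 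For the unshifted module this forces vanishing: the degree-$k$ piece $H^m_\frakm(S_\d)_k=H^m_\fraka(A)_{gk}\otimes_KB_{hk}$ needs $gk\le-m$ and $hk\ge 0$ simultaneously, which is impossible for $g,h>0$, and similarly for degree $n$. Hence $H^q_\frakm(S_\d)=0$ for $q<m+n-1$, i.e. $S_\d$ is Cohen--Macaulay.

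With $S_\d$ Cohen--Macaulay the long exact sequence collapses to isomorphisms $H^q_\frakm(\calR_\d)\cong H^{q+1}_\frakm(S(-d,-e)_\d)$ for $q<m+n-2$. By Lemma~\ref{lemma:segre} the shifted module has $H^m_\frakm(S(-d,-e)_\d)\neq0$ exactly when some integer $k$ satisfies $e/h\le k\le(d-m)/g$, and $H^n_\frakm(S(-d,-e)_\d)\neq0$ exactly when some integer satisfies $d/g\le k\le(e-n)/h$. Thus $\calR_\d$ is Cohen--Macaulay iff both intervals are integer-free, which rewrites as $\lfloor(d-m)/g\rfloor<e/h$ and $\lfloor(e-n)/h\rfloor<d/g$; this gives (1), and the special case $d<m,e<n$ makes both floors negative. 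For (2) I would read off the top of the same sequence and dualize; since $(-)_\d$ is exact and commutes with the relevant change-of-grading and local-duality computations (Lemma~\ref{lemma:segre} together with \cite[Lemma~1.1]{HHR}), the canonical module of $\calR_\d$ is the diagonal of that of $\calR$. As $\calR$ is a hypersurface, $\omega_\calR=\calR(d-m,e-n)$, whence $\omega_{\calR_\d}=\calR(d-m,e-n)_\d$. Finally $\calR(i,j)_\d$ is free of rank one over $\calR_\d$ precisely when $(i,j)\in\d$, so $\calR_\d$ is Gorenstein iff $(d-m)/g=(e-n)/h\in\ZZ$.

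For (3) and (4) I would pass to the geometry. For generic $f$, Bertini shows that $\Proj\calR_\d$ is the smooth bidegree-$(d,e)$ hypersurface $V(f)\subset\PP^{m-1}\times\PP^{n-1}$, so $\calR_\d$ has an isolated singularity and the cone criteria of Sections~\ref{section:rational} and~\ref{section:Freg} apply. From $\omega_{\calR_\d}=\calR(d-m,e-n)_\d$ one reads off the least degree of the canonical module and obtains $a(\calR_\d)=\min\{\lfloor(d-m)/g\rfloor,\lfloor(e-n)/h\rfloor\}$, so $a(\calR_\d)<0$ iff $d<m$ or $e<n$. The rational-singularities criterion of Section~\ref{section:rational}---a normal graded ring with rational singularities on its punctured spectrum has rational singularities iff it is Cohen--Macaulay with negative $a$-invariant---then yields (3). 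For (4) I would invoke the finer F-regularity criterion of Section~\ref{section:Freg}, which is governed not by the single diagonal $a$-invariant but by the full multigraded invariant $\bsa(\calR)=(d-m,e-n)$: under the standing genericity, $\calR_\d$ is of F-regular type iff $\bsa(\calR)<\bszero$, that is, iff $d<m$ and $e<n$.

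The main obstacle is the pair (3)--(4), and specifically the gap between them. Rational singularities is controlled by the \emph{diagonal} $a$-invariant $a(\calR_\d)<0$, a single strict inequality amounting to $d<m$ or $e<n$, whereas F-regular type demands that the \emph{multigraded} invariant $\bsa(\calR)$ be negative in \emph{each} coordinate, i.e. $d<m$ and $e<n$. The crux is to show that F-regularity genuinely detects both coordinates, so that when exactly one inequality is strict (with $\calR_\d$ still Cohen--Macaulay) one gets rational singularities but not F-regular type; this is precisely what produces the promised families of rings with rational singularities that fail to be of F-regular type. This rests on the criterion of Section~\ref{section:Freg} and on verifying its hypotheses (the global/strong F-regularity of the base and the smoothness of $V(f)$) for generic $f$. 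The remaining technical point is the reduction-mod-$p$ bookkeeping needed to turn the characteristic-$p$ F-regularity statements into the characteristic-zero ``of F-regular type'' conclusion uniformly in the generic $f$.
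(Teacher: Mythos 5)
Your treatment of (1)--(3) is correct and matches the paper's argument almost step for step: the same exact sequence $0\to T(-d,-e)_\d\to T_\d\to\calR_\d\to 0$, the same K\"unneth computation via Lemma~\ref{lemma:segre} producing the two integer-free-interval conditions, the same identification $\omega_{\calR_\d}={\calR(d-m,e-n)}_\d$, and the same reduction of (3) to the Flenner--Watanabe criterion (your closed formula $a(\calR_\d)=\min\{\lfloor(d-m)/g\rfloor,\lfloor(e-n)/h\rfloor\}$ is a correct and slightly more explicit packaging of what the paper reads off from the canonical module). One small omission there: you should note that $H^{m+n-1}_\frakm(\calR_\d)=0$ (the paper proves the top map is surjective by dualizing to an injective multiplication by $f$), which is what pins down $\dim\calR_\d=m+n-2$ and makes the interval criterion an equivalence; and that $\calR_\d$ is normal because it is a direct summand of $\calR$.

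The genuine gap is in the ``only if'' direction of (4). You invoke ``the finer F-regularity criterion of Section~\ref{section:Freg}, which is governed by $\bsa(\calR)=(d-m,e-n)$: $\calR_\d$ is of F-regular type iff $\bsa(\calR)<\bszero$.'' No such criterion exists in that section, and as stated the inference is invalid: negativity of $\bsa$ is a necessary condition for F-regularity of the hypersurface $\calR$ \emph{itself} (this is how Theorem~\ref{thm:general-bigraded} rules out $d\ge m$ or $e\ge n$ for $B/fB$), but $\calR_\d$ is a proper direct summand of $\calR$, and F-regularity passes \emph{down} to summands, not up --- a summand can perfectly well be F-regular while the ambient ring is not, so the failure of $\bsa(\calR)<\bszero$ says nothing a priori about $\calR_\d$. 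You correctly identify this as ``the crux'' but then rest it on the nonexistent criterion, which is circular. The paper's actual mechanism is Proposition~\ref{proposition:freg}: in an F-regular $\NN$-graded ring, multiplication of the degree-$k$ piece into $H^{\dim}_\frakm(\omega^{(q)})$ is injective for some $q=p^e$. Applied to a characteristic $p$ model ${(\calR_p)}_\d$ with $k>d/g$, and combined with two further steps your sketch does not supply --- the identification of the \emph{symbolic} power $\omega^{(q)}$ with ${\calR_p(q(d-m),q(e-n))}_\d$ (using part (2) and reflexivity of these twisted diagonals), and the vanishing $H^{m+n-2}_{\frakm_p}({T_p(qd-qm,qe-qn)}_\d)=0$ from Lemma~\ref{lemma:segre} --- one bounds $\rank_K{({(\calR_p)}_\d)}_k$ by the rank of a K\"unneth factor that is nonzero only when $q(d-m)<d-gk<0$, forcing $d<m$, and symmetrically $e<n$. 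Without this (or an equivalent) argument, your proof establishes only the ``if'' direction of (4), which you do have in outline: for $d<m$, $e<n$, the characteristic $p$ models of $\calR$ are F-regular (either via Theorem~\ref{thm:multigraded} plus Theorem~\ref{thm:HSWMS} and Gorensteinness, or directly via Theorem~\ref{thm:general-bigraded}), and F-regularity descends to the summand ${(\calR_p)}_\d$.
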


For $m,n\ge 3$ and $\d=(1,1)\ZZ$, the properties of $\calR_\d$, as determined by $m,n,d,e$, are summarized in Figure~1.
\begin{figure}[ht]
\label{fig}
\psfrag{d}{$d$}
\psfrag{e}{$e$}
\psfrag{m-1}{$m-1$}
\psfrag{n-1}{$n-1$}
\psfrag{e=d-m+1}{$e=d-m+1$}
\psfrag{e=d+n-1}{$e=d+n-1$}
\psfrag{Cohen-Macaulay}{Cohen-Macaulay}
\psfrag{Rational Singularities}{Rational Singularities}
\psfrag{F-regular type}{F-regular type}
\psfrag{Gorenstein: e=d-m+n}{Gorenstein: $e=d-m+n$}
\includegraphics[scale=.4]{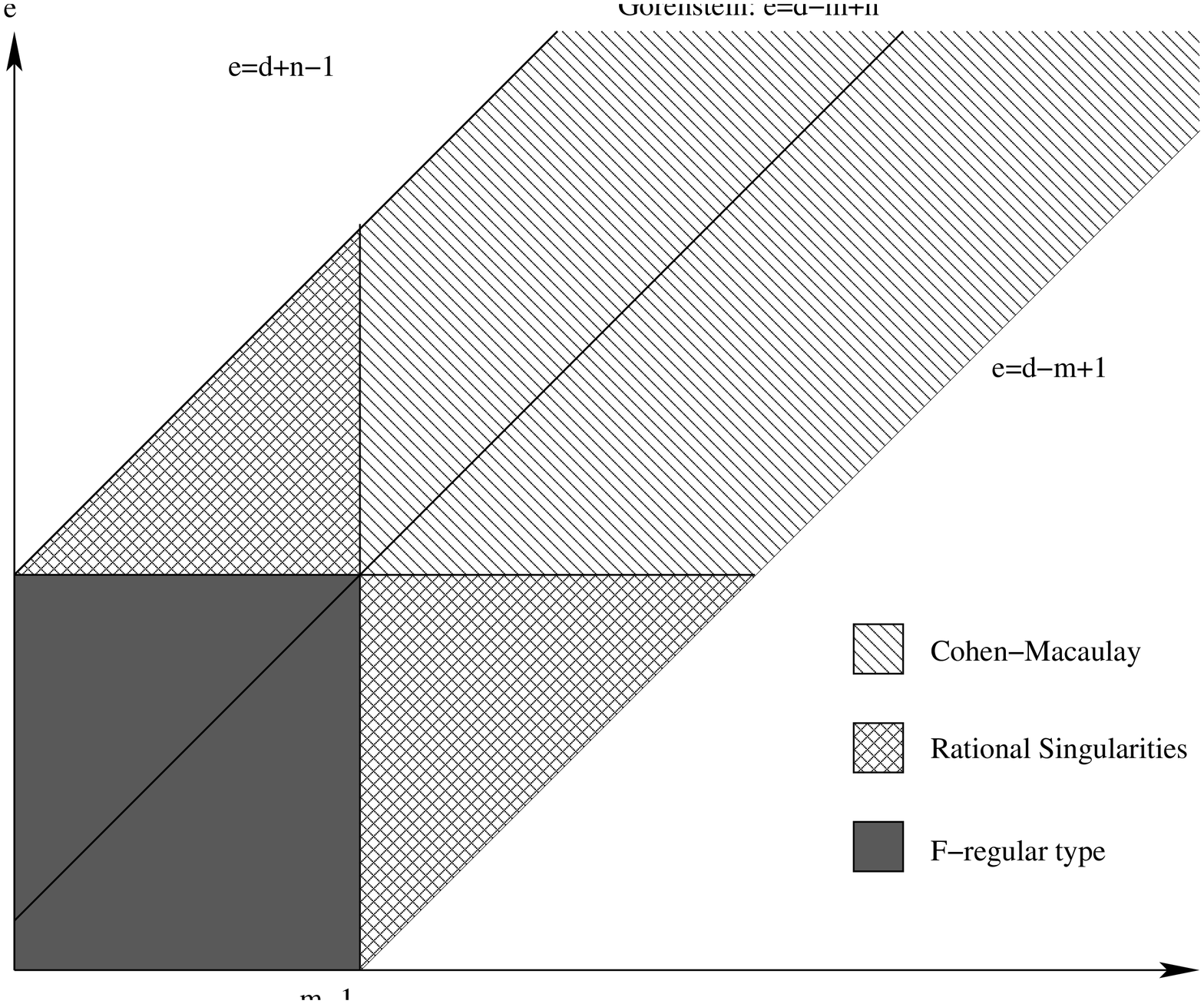}
\caption{Properties of $\calR_\d$ for $\d=(1,1)\ZZ$.}
\end{figure}

\begin{remark}
Let $m,n\ge2$. A generic hypersurface of degree $(d,e)>(0,0)$ in $m,n$ variables is normal precisely when
\[
m>\min(2,d)\qquad\text{ and }\qquad n>\min(2,e)\,.
\]
Suppose that $m=2=n$, and that $f$ is nonzero. Then $\dim\calR_\d=2$; since $\calR_\d$ is generated over a field by elements of equal degree, $\calR_\d$ is of F-regular type if and only if it has rational singularities; see \cite{KW:dim2}. This is the case precisely if
\begin{center}
\begin{tabular}{lll}
$d=1,\ e\le h+1$\,, & or \\
$e=1,\ d\le g+1$\,.\\
\end{tabular}
\end{center}

Following a suggestion of Hara, the case $n=2$ and $e=1$ was used in \cite[Example~7.3]{Singh:PJM} to construct examples of standard graded rings with rational singularities which are not of F-regular type. 
\end{remark}

\begin{proof}[Proof of Theorem~\ref{theorem:hypersurface}]
Set $A=K[\bsx]$, $B=K[\bsy]$, and $T=A\otimes_KB$. By Lemma~\ref{lemma:segre}, $H^q_\frakm(T_\d)=0$ for $q\neq m+n-1$. The local cohomology exact sequence induced by
\[\CD
0@>>>{T(-d,-e)}_\d@>f>>T_\d@>>>\calR_\d@>>>0
\endCD\]
therefore gives $H^{q-1}_\frakm(\calR_\d)=H^q_\frakm({T(-d,-e)}_\d)$ for $q\le m+n-2$, and also shows that $H^{m+n-2}_\frakm(\calR_\d)$ and $H^{m+n-1}_\frakm(\calR_\d)$ are, respectively, the kernel and cokernel of
\[\CD
H^{m+n-1}_\frakm({T(-d,-e)}_\d)@>f>>H^{m+n-1}_\frakm(T_\d)\\
@| @|\\
{[H^m_\fraka(A(-d))\otimes H^n_\frakb(B(-e))]}_\d@>f>>{[H^m_\fraka(A)\otimes H^n_\frakb(B)]}_\d\,.
\endCD\]
The horizontal map above is surjective since its graded dual
\[\CD
{[A(d-m)\otimes B(e-n)]}_\d @<f<<{[A(-m)\otimes B(-n)]}_\d\\
@| @|\\
{T(d-m,e-n)}_\d @<f<<{T(-m,-n)}_\d
\endCD\]
is injective. In particular, $\dim\calR_\d=m+n-2$.

It follows from the above discussion that $\calR_\d$ is Cohen-Macaulay if and only if $H^q_\frakm({T(-d,-e)}_\d)=0$ for each $q\le m+n-2$. By Lemma~\ref{lemma:segre}, this is the case if and only if, for each integer $k$, one has
\[
A_{-d+gk}\otimes{H^n_\frakb(B)}_{-e+hk}=0={H^m_\fraka(A)}_{-d+gk}\otimes B_{-e+hk}\,.
\]
Hence $\calR_\d$ is Cohen-Macaulay if and only if there is no integer $k$ satisfying
\[
d/g\le k\le(e-n)/h\qquad\text{ or }\qquad e/h\le k\le(d-m)/g\,,
\]
which completes the proof of (1).

For (2), note that the graded canonical module of $\calR_\d$ is the graded dual of $H^{m+n-2}_\frakm(\calR_\d)$, and hence that it equals
\[
\coker\big({T(-m,-n)}_\d\overset{f}\to{T(d-m,e-n)}_\d\big)={\calR(d-m,e-n)}_\d\,.
\]
This module is principal if and only if ${\calR(d-m,e-n)}_\d=\calR_\d(a)$ for some integer~$a$, i.e., $d-m=ga$ and $e-n=ha$.

When $f$ is a general polynomial of degree $(d,e)$, the ring $\calR_\d$ has an isolated singularity. Also, $\calR_\d$ is normal since it is a direct summand of the normal ring $\calR$. By Theorem~\ref{thm:FlennerWatanabe}, $\calR_\d$ has rational singularities precisely if it is Cohen-Macaulay and $a(\calR_\d)<0$; this proves (3).

It remains to prove (4). If $d<m$ and $e<n$, then Theorem~\ref{thm:multigraded} implies that $\calR$ has rational singularities. By Theorem~\ref{thm:HSWMS}, it follows that for almost all primes $p$, the characteristic $p$ models $\calR_p$ of $\calR$ are F-rational hypersurfaces which, therefore, are F-regular. Alternatively, $\calR_p$ is a generic hypersurface of degree $(d,e)<(m,n)$, so Theorem~\ref{thm:general-bigraded} implies that $\calR_p$ is F-regular. Since ${(\calR_p)}_\d$ is a direct summand of $\calR_p$, it follows that ${(\calR_p)}_\d$ is F-regular. The rings ${(\calR_p)}_\d$ are characteristic $p$ models of $\calR_\d$, so we conclude that $\calR_\d$ is of F-regular type.

Suppose $\calR_\d$ has F-regular type, and let ${(\calR_p)}_\d$ be a characteristic $p$ model which is F-regular. Fix an integer $k>d/g$. Then Proposition~\ref{proposition:freg} implies that there exists an integer $q=p^e$ such that 
\[
\rank_K{({(\calR_p)}_\d)}_k\le\rank_K{[H^{m+n-2}_\frakm(\omega^{(q)})]}_k\,,
\]
where $\omega$ is the graded canonical module of ${(\calR_p)}_\d$. Using (2), we see that
\[
H^{m+n-2}_\frakm(\omega^{(q)})=H^{m+n-2}_\frakm({\calR_p(qd-qm,qe-qn)}_\d)\,.
\]
Let $T_p$ be a characteristic $p$ model for $T$ such that $T_p/fT_p=\calR_p$. Multiplication by $f$ on $T_p$ induces a local cohomology exact sequence
\begin{multline*}
\cdots\to H^{m+n-2}_{\frakm_p}{(T_p(qd-qm,qe-qn)}_\d)\to H^{m+n-2}_{\frakm_p}({\calR_p(qd-qm,qe-qn)}_\d)\\
\to H^{m+n-1}_{\frakm_p}{(T_p(qd-qm-d,qe-qn-e)}_\d)\to\cdots\,.
\end{multline*}
Since $H^{m+n-2}_{\frakm_p}{(T_p(qd-qm,qe-qn)}_\d)$ vanishes by Lemma~\ref{lemma:segre}, we conclude that
\begin{align*}
\rank_K{({(\calR_p)}_\d)}_k&\le\rank_K{[H^{m+n-1}_{\frakm_p}{(T_p(qd-qm-d,qe-qn-e)}_\d)]}_k\\
&=\rank_K{H^m_{\fraka_p}(A_p)}_{qd-qm-d+gk}\otimes{H^n_{\frakb_q}(B_p)}_{qe-qn-e+hk}\,.
\end{align*}
Hence $qd-qm-d+gk<0$; as $d-gk<0$, we conclude $d<m$. Similarly, $e<n$.
\end{proof}

We conclude this section with an example where a local cohomology module of a standard graded ring is not rigid in the sense that ${H^2_\frakm(R)}_0=0$ while ${H^2_\frakm(R)}_1\neq0$. Further such examples are constructed in Section~\ref{section:classgroup}.

\begin{proposition}
Let $K$ be a field and let
\[
\calR=K[x_1,x_2,x_3,y_1,y_2]/(f)
\]
where $\deg x_i=(1,0)$, $\deg y_j=(0,1)$, and $\deg f=(d,e)$ for $d\ge4$ and $e\ge 1$. Let $g$ and $h$ be positive integers such that $g\le d-3$ and $h\ge e$, and set $\d=(g,h)\ZZ$. Then ${H^2_\frakm(\calR_\d)}_0=0$ and ${H^2_\frakm(\calR_\d)}_1\neq0$.
\end{proposition}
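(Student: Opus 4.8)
The plan is to compute the relevant graded pieces of $H^2_\frakm(\calR_\d)$ directly using the machinery assembled in the proof of Theorem~\ref{theorem:hypersurface}, specialized to $m=3$, $n=2$. First I would record that with these values the dimension is $m+n-2=3$, so $H^2_\frakm(\calR_\d)$ is \emph{not} the top local cohomology; rather, by the analysis following the exact sequence
\[\CD
0@>>>{T(-d,-e)}_\d@>f>>T_\d@>>>\calR_\d@>>>0\,,
\endCD\]
one has the isomorphism $H^{q-1}_\frakm(\calR_\d)=H^q_\frakm({T(-d,-e)}_\d)$ for $q\le m+n-2=3$. Taking $q=3$ gives
\[
H^2_\frakm(\calR_\d)=H^3_\frakm\big({T(-d,-e)}_\d\big)\,.
\]
This reduces the problem to a single application of Lemma~\ref{lemma:segre}, which is the natural engine here.

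Next I would expand the right-hand side via Lemma~\ref{lemma:segre} with $i=-d$, $j=-e$, and $q=3$. Writing $A=K[x_1,x_2,x_3]$ and $B=K[y_1,y_2]$, the local cohomology of the polynomial rings is concentrated in a single degree: $H^3_\fraka(A)$ lives in top cohomological degree with $H^m_\fraka(A)=H^3_\fraka(A)$ nonzero in degrees $\le-m=-3$, while $H^n_\frakb(B)=H^2_\frakb(B)$ is nonzero in degrees $\le-n=-2$, and the lower local cohomologies $H^{<m}_\fraka(A)$, $H^{<n}_\frakb(B)$ vanish. For $q=3=m+n-2$, the only surviving summand in the Künneth-type formula of Lemma~\ref{lemma:segre} is the cross term with $q_1=m=3$ and $q_2=n=2$, so that
\[
{H^2_\frakm(\calR_\d)}_k={\big[H^3_\fraka(A)(-d)\otimes H^2_\frakb(B)(-e)\big]}_\d\text{ in degree }k\,,
\]
i.e.\ ${H^2_\frakm(\calR_\d)}_k={H^3_\fraka(A)}_{-d+gk}\otimes_K{H^2_\frakb(B)}_{-e+hk}$. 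This is the key identification, and the rest is a counting argument on which graded pieces are nonzero.

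Then I would check the two specific degrees $k=0$ and $k=1$ by the standard graded-Matlis-duality description of the local cohomology of a polynomial ring: ${H^3_\fraka(A)}_s\neq0$ exactly when $s\le-3$, and ${H^2_\frakb(B)}_t\neq0$ exactly when $t\le-2$. For $k=0$ the first tensor factor sits in degree $-d+g\cdot0=-d$; since $d\ge4$ we have $-d\le-4\le-3$, so this factor is nonzero, but the second factor sits in degree $-e+h\cdot0=-e$, and because $h\ge e$ gives $-e\ge-h$ while the hypotheses force $-e>-2$, i.e.\ we need $e\le1$; the inequalities $e\ge1$ and $h\ge e$ should be combined with the requirement $-e+h\cdot0\le -2$ failing, making the second factor vanish and hence ${H^2_\frakm(\calR_\d)}_0=0$. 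For $k=1$ the degrees become $-d+g$ and $-e+h$; the condition $g\le d-3$ yields $-d+g\le-3$ so the first factor is nonzero, and $h\ge e$ yields $-e+h\ge0\ge-2$—wait, I must verify the nonvanishing direction carefully—so I would instead confirm $-e+h\le-2$ or its negation as the mechanism that makes ${H^2_\frakm(\calR_\d)}_1\neq0$.

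The main obstacle, and the step demanding the most care, is exactly this bookkeeping of signs and the precise vanishing ranges, since the two target degrees $k=0,1$ straddle the nonvanishing thresholds and the asymmetry between $m=3$ (threshold $-3$) and $n=2$ (threshold $-2$) is what drives the phenomenon. I would organize it as: (i) establish nonvanishing of the $A$-factor in both degrees using $d\ge4$ and $g\le d-3$; (ii) show the $B$-factor \emph{vanishes} at $k=0$ and is \emph{nonzero} at $k=1$ using the hypotheses $e\ge1$ and $h\ge e$, translating each into the inequality on $-e+hk$ relative to the threshold $-2=-n$. Getting these four inequalities to line up correctly—and confirming that no other Künneth summand contributes in cohomological degree $q=3$ because all lower local cohomologies of $A$ and $B$ are zero—is the crux; once the arithmetic is pinned down the conclusion is immediate.
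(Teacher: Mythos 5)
Your reduction is the same as the paper's: from the short exact sequence $0\to {T(-d,-e)}_\d\to T_\d\to\calR_\d\to 0$ and the vanishing $H^2_\frakm(T_\d)=0=H^3_\frakm(T_\d)$ (both via Lemma~\ref{lemma:segre}) one gets $H^2_\frakm(\calR_\d)\cong H^3_\frakm\big({T(-d,-e)}_\d\big)$. But your application of Lemma~\ref{lemma:segre} selects the wrong summand, and this is fatal to the rest of the argument. In that lemma the cross terms range over $q_1+q_2=q+1$; for $q=3$ this means $q_1+q_2=4$, which \emph{excludes} $(q_1,q_2)=(3,2)$ --- that term contributes only to $q=m+n-1=4$, i.e., to the top local cohomology $H^3_\frakm(\calR_\d)$. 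Since $H^{q_1}_\fraka(A)\neq0$ forces $q_1=3$ and $H^{q_2}_\frakb(B)\neq0$ forces $q_2=2$, every cross term vanishes at $q=3$, as does $A_{-d+gk}\otimes{H^3_\frakb(B)}_{-e+hk}$; the unique surviving summand is
\[
{H^2_\frakm(\calR_\d)}_k={H^3_\fraka(A)}_{-d+gk}\otimes_K B_{-e+hk}\,,
\]
with the \emph{polynomial ring} $B$, not $H^2_\frakb(B)$, as the second factor. This is exactly the formula the paper uses.

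The error is not mere bookkeeping: with your identification the statement is false. At $k=1$ you would need ${H^2_\frakb(B)}_{h-e}\neq0$, i.e., $h-e\le-2$, contradicting $h\ge e$; so your formula predicts ${H^2_\frakm(\calR_\d)}_1=0$, the opposite of the claim --- this is precisely the ``wait'' you ran into and never resolved, and no amount of pinning down the arithmetic can fix it. Likewise at $k=0$ your mechanism for vanishing requires $e\le 1$, whereas the proposition allows all $e\ge1$. With the correct summand everything lines up at once: at $k=0$ the factor $B_{-e}$ is a negative graded piece of a polynomial ring, hence zero for $e\ge1$ (even though ${H^3_\fraka(A)}_{-d}\neq0$ since $d\ge4$); at $k=1$ the hypothesis $g\le d-3$ gives $g-d\le-3$, so ${H^3_\fraka(A)}_{g-d}\neq0$, and $h\ge e$ gives $h-e\ge0$, so $B_{h-e}\neq0$. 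In short, the gap is a single decisive misreading of the K\"unneth decomposition in Lemma~\ref{lemma:segre}; once the second factor is corrected from $H^2_\frakb(B)$ to $B$ itself, your outline coincides with the paper's proof.
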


\begin{proof}
Using the resolution of $\calR$ over the polynomial ring $T$ as in the proof of Theorem~\ref{theorem:hypersurface}, we have an exact sequence
\[
H^2_\frakm(T_\d)\to H^2_\frakm(\calR_\d)\to H^3_\frakm(T(-d,-e)_\d)\to H^3_\frakm(T_\d)\,.
\]
Lemma~\ref{lemma:segre} implies that $H^2_\frakm(T_\d)=0=H^3_\frakm(T_\d)$. Hence, again by Lemma~\ref{lemma:segre},
\[
{H^2_\frakm(\calR_\d)}_0={H^3(A)}_{-d}\otimes B_{-e}=0\quad\text{ and }\quad
{H^2_\frakm(\calR_\d)}_1={H^3(A)}_{g-d}\otimes B_{h-e}\neq 0\,.\qedhere
\]
\end{proof}

\section{Non-rigid local cohomology modules}
\label{section:classgroup}

We construct examples of standard graded normal rings $R$ over~$\CC$, with only isolated singularities, for which ${H^2_\frakm(R)}_0=0$ and ${H^2_\frakm(R)}_1 \neq 0$. Let $S$ be the localization of such a ring $R$ at its homogeneous maximal ideal. By results of Danilov \cite{Danilov1,Danilov2}, Theorem~\ref{thm:Danilov} below, it follows that the divisor class group of $S$ is finitely generated, though $S$ does not have a discrete divisor class group, i.e., the natural map $\Cl(S)\to\Cl(S[[t]])$ is not bijective. Here, remember that if $A$ is a Noetherian normal domain, then so is $A[[t]]$.

\begin{theorem}
\label{thm:Danilov}
Let $R$ be a standard graded normal ring, which is finitely generated as an algebra over $R_0=\CC$. Assume, moreover, that $X=\Proj R$ is smooth. Set $(S,\frakm)$ to be the local ring of $R$ at its homogeneous maximal ideal, and $\widehat{S}$ to be the $\frakm$-adic completion of $S$. Then
\begin{enumerate}
\item the group $\Cl(S)$ is finitely generated if and only if $H^1(X,\calO_X)=0$;
\item the map $\Cl(S)\to\Cl(\hat{S})$ is bijective if and only if $H^1(X,\calO_X(i))=0$ for each integer $i\ge 1$; and
\item the map $\Cl(S)\to\Cl(S[[t]])$ is bijective if and only if $H^1(X,\calO_X(i))=0$ for each integer $i\ge 0$.
\end{enumerate}
\end{theorem}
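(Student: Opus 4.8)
The plan is to translate all three statements into the cohomology of line bundles on $X$ via the cone structure of $\Spec R$, and then into the exponential and formal-function exact sequences. Write $C=\Spec R$ for the affine cone and $v$ for its vertex, the point cut out by the irrelevant maximal ideal. Since $R$ is graded and normal, $\Cl(S)\cong\Cl(R)\cong\Cl(C)$, and as $C$ is normal with its only singularity at $v$, a point of codimension at least two, $\Cl(C)=\operatorname{Pic}(U)$ for the smooth punctured cone $U=C\setminus\{v\}$. The projection $\pi\colon U\to X$ realizes $U$ as the $\CC^*$-bundle associated to $\calO_X(1)$, namely the complement of the zero section in the total space of $\calO_X(-1)$; as $\pi^*\calO_X(1)$ acquires a nowhere-zero section and so is trivial, the standard presentation of the Picard group of a $\CC^*$-bundle gives $\Cl(S)\cong\operatorname{Pic}(X)/\ZZ\,[\calO_X(1)]$. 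For (1) I would feed this into the exponential sequence $0\to\ZZ\to\calO_X\to\calO_X^*\to0$: over $\CC$ with $X$ smooth and projective, the image of $H^1(X,\calO_X)$ in $\operatorname{Pic}(X)$ is the connected component $\operatorname{Pic}^0(X)$, an abelian variety of dimension $\dim_\CC H^1(X,\calO_X)$, while the quotient by it is the finitely generated N\'eron--Severi group. Hence $\operatorname{Pic}(X)$ is finitely generated if and only if $H^1(X,\calO_X)=0$, and since $\Cl(S)$ differs from $\operatorname{Pic}(X)$ by one cyclic subgroup this property passes both ways.

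For (2) and (3) I would compare algebraic line bundles on $X$ with formal ones on infinitesimal thickenings. Resolve the vertex by the total space $Y\to C$ of $\calO_X(-1)$, whose exceptional set is the zero section $E\cong X$; then $\Cl(\hat S)$ is read off from $\operatorname{Pic}(\hat Y)\cong\varprojlim_k\operatorname{Pic}(Y_k)$, where $\hat Y$ is the formal completion of $Y$ along $E$ and $Y_k$ is the $k$-th infinitesimal neighborhood. The conormal sheaf of $E$ in $Y$ is $\calO_X(1)$, so $\mathcal{I}^k/\mathcal{I}^{k+1}\cong\calO_X(k)$ and each inclusion $Y_{k-1}\hookrightarrow Y_k$ is square-zero; the resulting exponential sequence $0\to\calO_X(k)\to\calO_{Y_k}^*\to\calO_{Y_{k-1}}^*\to0$ yields
\[
H^1(X,\calO_X(k))\to\operatorname{Pic}(Y_k)\to\operatorname{Pic}(Y_{k-1})\to H^2(X,\calO_X(k))\,.
\]
The completion map $\Cl(S)\to\Cl(\hat S)$ is injective by faithful flatness, and its cokernel is governed by $\varprojlim_k\ker\big(\operatorname{Pic}(Y_k)\to\operatorname{Pic}(X)\big)$, an inverse limit of iterated extensions of subquotients of the groups $H^1(X,\calO_X(k))$ with $k\ge1$. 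Since $\calO_X(1)$ is ample, Serre vanishing kills all but finitely many of these, so the limit is controlled, and vanishing of every $H^1(X,\calO_X(i))$ with $i\ge1$ forces the cokernel to vanish; this gives (2). For (3) the same analysis applies to the completion of $R[t]$, with $\deg t=0$, along $(\frakm,t)$: the extra degree-zero formal direction supplied by $t$ widens the range of relevant twists to $k\ge0$, contributing the additional term $H^1(X,\calO_X)$ and producing the condition for all $i\ge0$. I would also record the graded identification $H^1(X,\calO_X(i))\cong[H^2_\frakm(R)]_i$, which exhibits these criteria as statements about the very local cohomology modules studied in this paper.

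The main obstacle is the ``only if'' direction of (2) and (3): one must show that a single nonvanishing $H^1(X,\calO_X(i))$ genuinely produces a formal, respectively power-series, divisor class not defined over $S$. This requires controlling the $H^2$-obstruction maps and the $\varprojlim^1$ terms in the inverse system of Picard groups---rather than merely noting that vanishing of the $H^1$'s is sufficient---together with a careful comparison between the divisor class group of the singular completed cone and the Picard group of its formal resolution.
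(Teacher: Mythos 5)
Note first that the paper contains no proof of this statement: Theorem~\ref{thm:Danilov} is quoted as Danilov's theorem, with the citations [Da1, Da2] standing in for the argument, so your attempt can only be measured against Danilov's original proofs. Your skeleton is the standard and correct one as far as it goes: $\Cl(S)\cong\Cl(R)\cong\operatorname{Pic}(X)/\ZZ[\calO_X(1)]$ (using normality of $R$ to identify it with the section ring, and the $\mathbb{G}_m$-torsor structure of the punctured cone), and part~(1) does follow from the exponential sequence plus the theorem of the base, since over $\CC$ a positive-dimensional $\operatorname{Pic}^0(X)$ is divisible and uncountable. The truncated exponential sequences $0\to\calO_X(k)\to\calO_{Y_k}^*\to\calO_{Y_{k-1}}^*\to 0$ and Serre vanishing are the right tools for the ``if'' direction of~(2); one step you wave at, identifying $\Cl(\hat S)$ with $\operatorname{Pic}(\hat Y)/\ZZ[\calO(E)]$, needs Grothendieck's existence theorem for the proper morphism $Y\times_{\Spec R}\Spec\hat S\to\Spec\hat S$ together with the fact that divisor classes on $\hat S$ extend to line bundles on that resolution, but this can be supplied.

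The genuine gaps are two. First, the ``only if'' halves of (2) and (3), which you flag as ``the main obstacle,'' are not a refinement but the substance of the theorem. If $i$ is minimal with $H^1(X,\calO_X(i))\neq 0$, your tower gives $\ker\big(\operatorname{Pic}(Y_i)\to\operatorname{Pic}(Y_{i-1})\big)\cong H^1(X,\calO_X(i))$ (the connecting map from units vanishes because $H^0(\calO_{Y_k})\to H^0(\calO_{Y_{k-1}})$ is a surjective truncation), but to produce a class of $\Cl(\hat S)$ outside the image of $\Cl(S)$ you must lift such a kernel class through all later stages, where the obstructions lie in $H^2(X,\calO_X(k))$ and the obstruction maps are not linear, and then verify the limiting class is not algebraizable. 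Danilov proves the stronger splitting $\Cl(\hat S)\cong\Cl(S)\oplus\bigoplus_{i\ge 1}H^1(X,\calO_X(i))$, exploiting the grading to trivialize these obstructions; nothing in your sketch substitutes for that. Second, your reduction in (3) is incorrect as stated: $S[[t]]$ is \emph{not} the $(\frakm,t)$-adic completion of $R[t]$ or $S[t]$ --- that completion is $\hat S[[t]]$, while $S[[t]]$ is complete only $t$-adically. If one could ``run the same analysis'' after completing, parts (2) and (3) would yield the same condition, whereas they differ exactly by the twist $i=0$; relating $\Cl(S[[t]])$ to $\Cl(S)$, as opposed to $\Cl(\hat S[[t]])$, is a separate argument in Danilov's second paper and cannot be obtained by formally widening the range of twists.
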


The essential point in our construction is in the following proposition:

\begin{theorem}
\label{thm:rigidity}
Let $A$ be a Cohen-Macaulay ring of dimension $d\ge 2$, which is a standard graded algebra over a field $K$. For $s\ge 2$, let $z_1,\dots,z_s$ be a regular sequence in $A$, consisting of homogeneous elements of equal degree, say $k$. Consider the Rees ring $\calR=A[z_1t,\dots,z_st]$ with the $\ZZ^2$-grading where $\deg x=(n,0)$ for $x\in A_n$, and $\deg z_it=(0,1)$.

Let $\d=(g,h)\ZZ$ where $g,h$ are positive integers, and let $\frakm$ denote the homogeneous maximal ideal of $\calR_\d$. Then:
\begin{enumerate}
\item $H^q_\frakm(\calR_\d)=0$ if $q\neq d-s+1, d$; and
\item ${H^{d-s+1}_\frakm(\calR_\d)}_i\neq 0$ if and only if $1\le i\le(a+ks-k)/g$, where $a$ is the $a$-invariant of $A$.
\end{enumerate}
In particular, $\calR_\d$ is Cohen-Macaulay if and only if $g>a+ks-k$.
\end{theorem}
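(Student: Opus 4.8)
The plan is to compute the local cohomology of $\calR_\d$ by resolving $\calR$ over a polynomial ring and transporting the computation to the diagonal via Lemma~\ref{lemma:segre}. Since $z_1,\dots,z_s$ is a regular sequence of forms of equal degree $k$, the Rees ring $\calR=A[z_1t,\dots,z_st]$ has a well-understood presentation: writing $\calS=A[y_1,\dots,y_s]$ with $\deg y_i=(k,1)$ and mapping $y_i\mapsto z_it$, the kernel is generated by the $2\times 2$ minors of the matrix $\binom{z_1\ \cdots\ z_s}{y_1\ \cdots\ y_s}$, and $\calR$ is resolved over $\calS$ by an Eagon--Northcott type complex (equivalently, the Koszul-type resolution attached to the regular sequence). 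The key structural input is that $\calR\cong\calS/I$ where $I$ has codimension $s-1$ and the resolution is pure, so that the graded Betti numbers and their multidegrees are explicit.

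First I would set $T=\calS=A[y_1,\dots,y_s]$, viewed as an $\NN^2$-graded polynomial-type extension of $A$, and record that $\dim\calR=d+1$ (the Rees ring of an ideal of height $s\ge 1$ over a $d$-dimensional ring) while $\dim\calR_\d=d$. Then I would apply Lemma~\ref{lemma:segre} to each free module appearing in the minimal graded free resolution of $\calR$ over $T$ in order to compute $H^q_\frakm((\calR)_\d)$ from the $H^q_\frakm(T(\bsm)_\d)$. Because $A$ is Cohen--Macaulay of dimension $d$ and $B=K[y_1,\dots,y_s]$ is a polynomial ring in $s$ variables, Lemma~\ref{lemma:segre} forces $H^q_\frakm(T(i,j)_\d)$ to vanish except when $q=d+s-1$, with the nonvanishing piece expressed through ${H^d_\fraka(A)}\otimes{H^s_\frakb(B)}$. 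Splicing these vanishing statements through the resolution produces a spectral sequence (or iterated long exact sequence) whose $E_1$ page is concentrated in a single row, and collapsing it will show that $H^q_\frakm(\calR_\d)$ can only be nonzero in the two cohomological degrees $d-s+1$ and $d$, giving part~(1). The upper degree $d$ carries the top local cohomology, and the lower degree $d-s+1$ comes from the end of the resolution of length $s-1$.

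For part~(2), I would isolate $H^{d-s+1}_\frakm(\calR_\d)$ as the relevant cohomology of the complex obtained by applying $(-)_\d$ and $H^{d+s-1}_\frakm(T(-))$ to the tail of the resolution, and then read off its graded pieces. The $a$-invariant $a$ of $A$ enters through the statement ${H^d_\fraka(A)}_n\neq 0$ exactly for $n\le a$ (equivalently the graded dual $\omega_A$ is generated starting in the appropriate degree), while ${H^s_\frakb(B)}_n\neq 0$ exactly for $n\le -s$. Tracking the multidegrees $(k,1)$ of the $y_i$ and the shift $(0,\cdots)$ imposed by passing through the resolution, the condition for a given diagonal degree $i$ to survive becomes a pair of inequalities that combine to $1\le i\le (a+ks-k)/g$; the upper bound $a+ks-k$ is precisely the combination of the $a$-invariant of $A$ with the degree contribution $k(s-1)$ coming from the Koszul relations among the $z_i$. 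The final Cohen--Macaulay criterion is then immediate: $\calR_\d$ is Cohen--Macaulay of dimension $d$ iff $H^q_\frakm(\calR_\d)=0$ for all $q<d$, iff the range in~(2) is empty, iff $(a+ks-k)/g<1$, i.e. $g>a+ks-k$.

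The main obstacle I anticipate is the careful bookkeeping of the $\ZZ^2$-graded shifts in the resolution of $\calR$ over $T$ and their interaction with the diagonal $\d=(g,h)\ZZ$: one must verify that the spectral sequence degenerates cleanly so that only the two claimed degrees contribute, which requires checking that no cancellation or nontrivial differential can create or destroy cohomology in intermediate degrees. Establishing the exact multidegrees at the tail of the resolution, and confirming that they translate into the sharp bound $(a+ks-k)/g$ rather than something off by a multiple of $k$ or an additive constant, is the delicate step; everything else reduces to the vanishing theory already packaged in Lemma~\ref{lemma:segre} together with the standard description of $H^d_\fraka(A)$ via the $a$-invariant.
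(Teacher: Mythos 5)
Your overall frame---resolve $\calR$ over $T=A[y_1,\dots,y_s]$ by the Eagon--Northcott complex of the $2\times s$ matrix and push the computation through Lemma~\ref{lemma:segre} via a spectral sequence---is also the paper's frame, but the central vanishing claim your plan rests on is false, and it fails in exactly the cases the theorem is about. Lemma~\ref{lemma:segre} gives three kinds of contributions to $H^q_\frakm(T(n,m)_\d)$: terms of the form $A\otimes H^s_\frakb(B)$ in degree $q=s$, terms $H^d_\fraka(A)\otimes B$ in degree $q=d$, and the K\"unneth terms in degree $q=d+s-1$. Your assertion that only $q=d+s-1$ survives on the diagonal is not forced by Cohen--Macaulayness: for a twist $T(-jk,-i)$ with $i,j\ge 1$ appearing in the resolution, the degree-$\ell$ diagonal piece of the $q=d$ contribution is ${H^d_\fraka(A)}_{-jk+g\ell}\otimes B_{-i+h\ell}$, which is nonzero whenever $i/h\le\ell\le(a+jk)/g$---a nonempty range precisely when the theorem predicts $\calR_\d$ fails to be Cohen--Macaulay. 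In the paper only the $q=s$ row is killed on the diagonal (by a degree argument using that every twist $(n,m)$ in the complex satisfies $n\le 0$ and $m\ge-(s-1)$), while the $q=d$ row survives and is the entire source of the middle cohomology: the two-row spectral sequence degenerates and yields $H^i_\frakm(\calR_\d)={H^i_\fraka(\calR)}_\d$ for $d-s+1\le i\le d-1$. Your single-row picture is moreover internally inconsistent: a row at $q=d+s-1$ paired with a complex concentrated in cohomological degrees $-(s-1),\dots,0$ can only produce cohomology in total degrees $d,\dots,d+s-1$, so it could never account for $H^{d-s+1}_\frakm(\calR_\d)\neq 0$ (one would need $p=-2s+2<-(s-1)$); taken at face value it would ``prove'' that $\calR_\d$ is always Cohen--Macaulay, contradicting part~(2).

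Two further gaps. First, your grading $\deg y_i=(k,1)$ is not the grading of the theorem, which sets $\deg z_it=(0,1)$, so $y_i\mapsto z_it$ forces $\deg y_i=(0,1)$; the two conventions differ by the shear $(n,m)\mapsto(n+km,m)$ of $\ZZ^2$, under which the diagonal $(g,h)\ZZ$ becomes a different subalgebra, so your bookkeeping would produce a bound with $g$ effectively replaced by $g-hk$ rather than the stated $(a+ks-k)/g$. Second, even with the correct two-row spectral sequence, reading multidegrees off the tail of the resolution yields at best a vanishing range, not the asserted ``if and only if'': the nonvanishing of ${H^{d-s+1}_\frakm(\calR_\d)}_i$ for \emph{every} $i$ with $1\le i\le(a+ks-k)/g$ requires computing the cohomology ${H^{d-s+1}_\fraka(\calR)}_\d$ itself. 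The paper does this with a second ingredient your sketch omits entirely: the decomposition $\calR=A\oplus I(k)\oplus I^2(2k)\oplus\cdots$ with $I=(z_1,\dots,z_s)$, together with Lemma~\ref{lem:rigidity}, which computes $H^{d-s+1}_\fraka(I^r)$ from $0\to I^r\to A\to A/I^r\to 0$ and shows by induction, using $I^r/I^{r+1}\cong\big((A/I)(-rk)\big)^{\binom{s-1+r}{r}}$, that the $a$-invariant of $A/I^r$ equals $a+ks+rk-k$. Without this step neither the sharp upper bound nor the nonvanishing throughout the range is established.
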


\begin{example}
\label{ex:rigidity}
For $d\ge3$, let $A=\CC[x_0,\dots,x_d]/(f)$ be a standard graded hypersurface such that $\Proj A$ is smooth over $\CC$. Take general $k$-forms $z_1,\dots,z_{d-1}\in A$, and consider the Rees ring $\calR=A[z_1t,\dots,z_{d-1}t]$. Since $(\bsz)\subset A$ is a radical ideal, 
\[
\gr((\bsz),A)\cong A/(\bsz)[y_1,\dots,y_{d-1}]
\]
is a reduced ring, and therefore $\calR=A[z_1t,\dots,z_{d-1}t]$ is integrally closed in $A[t]$. Since $A$ is normal, so is $\calR$. Note that $\Proj\calR_\d$ is the blow-up of $\Proj A$ at the subvariety defined by $(\bsz)$, i.e., at $k^{d-1}(\deg f)$ points. It follows that $\Proj \calR_\d$ is smooth over $\CC$. Hence $\calR_\d$ is a standard graded $\CC$-algebra, which is normal and has an isolated singularity. 

If $\d=(g,h)\ZZ$ is a diagonal with $1\le g\le \deg f+k(d-2)-(d+1)$ and $h\ge1$, then Theorem~\ref{thm:rigidity} implies that
\[
{H_\frakm^2(\calR_\d)}_0=0\quad\text{ and }\quad{H_\frakm^2(\calR_\d)}_1\neq 0\,.
\]
\end{example}

The rest of this section is devoted to proving Theorem~\ref{thm:rigidity}. We may assume that the base field $K$ is infinite. Then one can find linear forms $x_1,\dots,x_{d-s}$ in $A$ such that $x_1,\dots,x_{d-s},z_1,\dots,z_s$ is a maximal $A$-regular sequence.

We will use the following lemma; the notation is as in Theorem~\ref{thm:rigidity}.

\begin{lemma}
\label{lem:rigidity}
Let $\fraka$ be the homogeneous maximal ideal of $A$. Set $I=(z_1,\dots,z_s)A$. Let $r$ be a positive integer.
\begin{enumerate}
\item $H^q_\fraka(I^r) = 0$ if $q \neq d-s+1, d$.
\item Assume $d > s$. Then, $H^{d-s+1}_\fraka(I^r)_i\neq 0$ if and only if $i\le a+ks+rk-k$.
\item Assume $d = s$. Then, $H^{d-s+1}_\fraka(I^r)_i\neq 0$ if and only if $0\le i\le a+ks+rk-k$.
\end{enumerate}
\end{lemma}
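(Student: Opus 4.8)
The plan is to compute the local cohomology of the powers $I^r$ by relating them to the local cohomology of $A$ and of the quotients $A/I^r$ via short exact sequences, using the regular sequence $z_1,\dots,z_s$ to control everything. Since $z_1,\dots,z_s$ is a regular sequence of forms of degree $k$, the associated graded ring $\gr_I(A)$ is a polynomial ring $A/I[y_1,\dots,y_s]$, and the Koszul-type structure gives us precise information on the graded pieces of $I^r/I^{r+1}\cong \operatorname{Sym}^r(I/I^2)$, which is a free $A/I$-module generated in degree $rk$. The first step is to fix a parameter: since $x_1,\dots,x_{d-s},z_1,\dots,z_s$ is a maximal $A$-regular sequence, we have $\dim A/I = d-s$, so $\fraka$-depth considerations will immediately confine the nonvanishing cohomology to the expected range.

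First I would establish part (1). The cleanest route is induction on $r$. For $r=1$, use the short exact sequence $0\to I\to A\to A/I\to 0$. Because $z_1,\dots,z_s$ is $A$-regular, $A/I$ is Cohen-Macaulay of dimension $d-s$, so $H^q_\fraka(A/I)=0$ for $q\neq d-s$, while $A$ is Cohen-Macaulay of dimension $d$, giving $H^q_\fraka(A)=0$ for $q\neq d$. The long exact sequence then forces $H^q_\fraka(I)=0$ for $q\neq d-s+1,d$, with $H^{d-s+1}_\fraka(I)\cong H^{d-s}_\fraka(A/I)$ (the connecting map) and $H^d_\fraka(I)\cong H^d_\fraka(A)$. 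For the inductive step I would use $0\to I^{r+1}\to I^r\to I^r/I^{r+1}\to 0$; since $I^r/I^{r+1}$ is a free $A/I$-module (hence Cohen-Macaulay of dimension $d-s$), its local cohomology is concentrated in degree $d-s$, and the same long-exact-sequence argument propagates the vanishing from $I^r$ to $I^{r+1}$.

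Next I would pin down the graded degrees in parts (2) and (3). The key identification from the induction is a short exact sequence
\[
0\to H^{d-s+1}_\fraka(I^{r+1})\to H^{d-s+1}_\fraka(I^r)\to H^{d-s}_\fraka(I^r/I^{r+1})\to\cdots
\]
once the flanking $H^d$ terms are understood; in fact $H^d_\fraka(I^r)\cong H^d_\fraka(A)$ for all $r$ by the same reasoning, so the maps on $H^d$ are isomorphisms and the sequence on $H^{d-s+1}$ and $H^{d-s}$ decouples. Since $I^r/I^{r+1}$ is free over $A/I$ on generators of degree $rk$, and $A/I$ is Cohen-Macaulay of dimension $d-s$ with $a$-invariant that I would compute from the $a$-invariant $a$ of $A$ (cutting by $s$ forms of degree $k$ raises the $a$-invariant to $a+ks$, as $a(A/(z_1,\dots,z_s))=a(A)+\sum\deg z_i$), the top local cohomology $H^{d-s}_\fraka(A/I)$ is nonzero exactly in degrees $\le a+ks$. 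Shifting by the generator degree $rk$ of $I^r/I^{r+1}$ gives nonvanishing of $H^{d-s}_\fraka(I^r/I^{r+1})$ precisely in degrees $\le a+ks+rk$. Assembling the tower of these sequences for $r, r+1, r+2,\dots$ and using that the cohomology stabilizes, I would read off that $H^{d-s+1}_\fraka(I^r)_i\neq 0$ exactly when $i\le a+ks+rk-k$; the shift by $-k$ reflects that the relevant contribution comes from the quotient $I^{r-1}/I^r$ rather than $I^r/I^{r+1}$ after indexing carefully.

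The main obstacle I expect is the bookkeeping that separates the $d>s$ case from the $d=s$ case, i.e. the lower bound on the degree $i$. When $d>s$ the module $A/I$ is positively graded with $\dim A/I = d-s\ge 1$, so its top local cohomology $H^{d-s}_\fraka(A/I)$ has no lower degree bound coming from the socle and extends to $-\infty$, yielding the one-sided condition $i\le a+ks+rk-k$ in part (2). But when $d=s$, the ring $A/I$ is Artinian, so $H^0_\fraka(A/I)=A/I$ is concentrated in nonnegative degrees (indeed in degrees $0$ through $a+ks$), which forces the extra constraint $i\ge 0$ in part (3). I would need to track this nonnegativity through the connecting homomorphisms and confirm that no cancellation in the long exact sequence destroys the bottom degree; verifying that the maps $H^{d-s}_\fraka(I^r/I^{r+1})\to H^{d-s+1}_\fraka(I^{r+1})$ are injective on the relevant graded pieces (so that degrees genuinely survive rather than being killed) is the delicate point, and I would handle it by a dimension count comparing the graded Hilbert functions on both sides.
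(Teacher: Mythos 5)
Your strategy is essentially the paper's computation reorganized: the paper passes to quotients via $0\to I^r\to A\to A/I^r\to 0$, identifies $H^{d-s+1}_\fraka(I^r)\cong H^{d-s}_\fraka(A/I^r)$, and then tracks the $a$-invariant of $A/I^r$ by induction on $r$ using $0\to I^r/I^{r+1}\to A/I^{r+1}\to A/I^r\to 0$ with $I^r/I^{r+1}\cong\big((A/I)(-rk)\big)^{\binom{s-1+r}{r}}$; you induct directly on the ideals via $0\to I^{r+1}\to I^r\to I^r/I^{r+1}\to 0$. The two inductions carry the same information, so the route is viable, but your key displayed sequence is mis-oriented. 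The connecting map in local cohomology raises the index, so from part (1) and the freeness of $I^r/I^{r+1}$ over $A/I$ the long exact sequence gives
\[
0\to H^{d-s}_\fraka(I^r/I^{r+1})\to H^{d-s+1}_\fraka(I^{r+1})\to H^{d-s+1}_\fraka(I^r)\to 0\,,
\]
i.e., $H^{d-s+1}_\fraka(I^{r+1})$ surjects onto $H^{d-s+1}_\fraka(I^r)$, not the injection you wrote; the induction must therefore run upward from the base case $r=1$, where $H^{d-s+1}_\fraka(I)\cong H^{d-s}_\fraka(A/I)$, with ranks adding degreewise. Once the arrows are straightened, the "delicate point" you flag --- injectivity of $H^{d-s}_\fraka(I^r/I^{r+1})\to H^{d-s+1}_\fraka(I^{r+1})$ --- is automatic from $H^{d-s}_\fraka(I^r)=0$, so no Hilbert-function comparison is needed; likewise there is no "stabilization" to invoke.

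In the case $d=s$, however, your interval bookkeeping has a genuine unclosed step. By induction the nonvanishing degrees of $H^1_\fraka(I^r)$ form the interval $[0,\,a+ks+rk-k]$, while $H^0_\fraka(I^r/I^{r+1})$ contributes $[rk,\,a+ks+rk]$; their union is the single interval $[0,\,a+ks+(r+1)k-k]$ only if these overlap, i.e., only if $a+ks\ge k-1$. This does hold, but it requires an argument you do not supply: $I$ is generated in degree $k$, so $(A/I)_{k-1}=A_{k-1}\neq0$, whence $a(A/I)=a+ks\ge k-1$. The paper's organization sidesteps this entirely: $H^0_\fraka(A/I^r)=A/I^r$ is a standard graded algebra, so its nonzero graded pieces automatically form an interval $[0,\,a(A/I^r)]$ with no possible gaps, and only the top degree needs to be tracked through the induction. (In the case $d>s$ no such care is needed, since both intervals are unbounded below; there your appeal to the fact that the top local cohomology of a standard graded Cohen--Macaulay ring of positive dimension is nonzero in every degree at most its $a$-invariant is correct, and matches what the paper uses implicitly.)
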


\begin{proof}
Recall that $A$ and $A/I^r$ are Cohen-Macaulay rings of dimension $d$ and $d-s$, respectively. By the exact sequence
\[
0\to I^r\to A\to A/I^r\to 0
\]
we obtain
\[
H^q_\fraka(I^r)=\begin{cases}
H^d_\fraka(A)&\text{ if } q=d\\
H^{d-s}_\fraka(A/I^r)&\text{ if } q=d-s+1\\
0&\text{ if }q\neq d-s+1,d\,,
\end{cases}
\]
which proves (1).

Next we prove (2) and (3). Since $A/I^r$ is a standard graded Cohen-Macaulay ring of dimension $d-s$, it is enough to show that the $a$-invariant of this ring equals $a+ks+rk-k$. This is straightforward if $r=1$, and we proceed by induction. Consider the exact sequence
\[
0\to I^r/I^{r+1}\to A/I^{r+1}\to A/I^r\to 0\,.
\]
Since $z_1,\dots,z_s$ is a regular sequence of $k$-forms, $I^r/I^{r+1}$ is isomorphic to 
\[
\big((A/I)(-rk)\big)^{\binom{s-1+r}{r}}\,.
\]
Thus, we have the following exact sequence:
\[
0\to H^{d-s}_\fraka\big((A/I)(-rk)\big)^{\binom{s-1+r}{r}}\to H^{d-s}_\fraka(A/I^{r+1})\to H^{d-s}_\fraka(A/I^r)\to 0\,.
\]
The $a$-invariant of $(A/I)(-rk)$ equals $a+ks+rk$, and that of $A/I^r$ is $a+ks+rk-k$ by the inductive hypothesis. Thus, $A/I^{r+1}$ has $a$-invariant $a+ks+rk$.
\end{proof}

\begin{proof}[Proof of Theorem~\ref{thm:rigidity}]
Let $B=K[y_1,\dots,y_s]$ be a polynomial ring, and set
\[
T=A\otimes_K B=A[y_1,\dots,y_s]\,.
\]
Consider the $\ZZ^2$-grading on $T$ where $\deg x=(n,0)$ for $x\in A_n$, and $\deg y_i=(0,1)$ for each $i$. One has a surjective homomorphism of graded rings
\[
T\to\calR=A[z_1t,\dots,z_st]\qquad\text{ where }y_i\mapsto z_it\,,
\]
and this induces an isomorphism
\[
\calR\cong T/I_2\!\left(\begin{smallmatrix}z_1&\dots&z_s\cr y_1&\dots&y_s\end{smallmatrix}\right)\,.
\]
The minimal free resolution of $\calR$ over $T$ is given by the Eagon-Northcott complex
\[
0\to F^{-(s-1)}\to F^{-(s-2)}\to\cdots\to F^{0}\to 0\,,
\]
where $F^0=T(0,0)$, and $F^{-i}$ for $1\le i\le s-1$ is the direct sum of $\binom{s}{i+1}$ copies of
\[
T(-k,-i)\oplus T(-2k,-(i-1))\oplus\cdots\oplus T(-ik,-1)\,.
\]
Let $\frakn$ be the homogeneous maximal ideal of $T_\d$. One has the spectral sequence:
\[
E^{p,q}_2=H^p(H^q_\frakn(F^\bullet_\d))\Longrightarrow H^{p+q}_\frakm(\calR_\d)\,.
\]

Let $G$ be the set of $(n,m)$ such that $T(n,m)$ appears in the Eagon-Northcott complex above, i.e., the elements of $G$ are
\[
\begin{array}{c}
(0,0)\,,\\
(-k,-1),\ \\
(-k,-2),\ (-2k,-1)\,,\\
(-k,-3),\ (-2k,-2),\ (-3k,-1)\,,\\
\vdots\\
(-k,-(s-1)),\qquad\dots\quad\ (-(s-1)k,-1)\,.
\end{array}
\]

Let $\fraka$ and $\frakb$ be the homogeneous maximal ideal of $A$ and $B$ respectively. For integers $n$ and $m$, the K\"unneth formula gives
\begin{align*}
H&^q_\frakn(T(n,m))\\
&=H^q_\frakn(A(n)\otimes_K B(m))\\
&=\big(H^q_\fraka(A(n))\otimes B(m)\big)\oplus\big(A(n)\otimes H^q_\frakb(B(m))\big)\oplus\bigoplus_{i+j=q+1}H^i_\fraka(A(n))\otimes H^j_\frakb(B(m))\\
&=H^q_\fraka(T(n,m))\oplus H^q_\frakb(T(n,m))\oplus
\bigoplus_{i+j=q+1}H^i_\fraka(A(n))\otimes_KH^j_\frakb(B(m))\,.
\end{align*}
As $A$ and $B$ are Cohen-Macaulay of dimension $d$ and $s$ respectively, it follows that
\[
H^q_\frakn(F^\bullet)=0\qquad\text{ if } q\neq s,d,d+s-1\,.
\]
In the case where $d>s$, one has
\[
H^s_\frakn(F^\bullet)=H^s_\frakb(F^\bullet)\quad\text{ and }\quad H^d_\frakn(F^\bullet)=H^d_\fraka(F^\bullet)\,,
\]
and if $d=s$, then
\[
H^d_\frakn(F^\bullet)=H^d_\fraka(F^\bullet)\oplus H^s_\frakb(F^\bullet)\,.
\]

We claim ${H^s_\frakb(F^\bullet)}_\d=0$. If not, there exists $(n,m)\in G$ and $\ell\in\ZZ$ such that
\[
{H^s_\frakb(T(n,m))}_{(g\ell,h\ell)}\neq 0\,.
\]
This implies that
\[
{H^s_\frakb(T(n,m))}_{(g\ell,h\ell)}={A(n)}_{g\ell}\otimes_K{H^s_\frakb(B(m))}_{h\ell}=A_{n+g\ell}\otimes_K{H^s_\frakb(B)}_{m+h\ell}
\]
is nonzero, so
\[
n+g\ell\ge 0\quad\text{ and }\quad m+h\ell\le-s\,,
\]
and hence
\[
-\frac{n}{g}\le\ell\le-\frac{s+m}{h}\,.
\]
But $(n,m)\in G$, so $n\le 0$ and $m\ge-(s-1)$, implying that
\[
0\le\ell\le-\frac{1}{h}\,,
\]
which is not possible. This proves that ${H^s_\frakb(F^\bullet)}_\d=0$. Thus, we have
\[
{H^q_\frakn(F^\bullet)}_\d=\begin{cases}
0&\text{ if }q\neq d, d+s-1,\\
{H^d_\fraka(F^\bullet)}_\d&\text{ if }q=d.
\end{cases}
\]
It follows that
\[
E_2^{p,q}=H^p(H^q_\frakn(F^\bullet_\d))=E_\infty^{p,q}
\]
for each $p$ and $q$. Therefore,
\[
H^i_\frakm(\calR_\d)=E_2^{i-d,d}=H^{i-d}(H^d_\frakn(F^\bullet_\d))=H^{i-d}({H^d_\fraka(F^\bullet)}_\d)={H^i_\fraka(\calR)}_\d
\]
for $d-s+1\le i\le d-1$, and
\[
H^i_\frakm(\calR_\d)=0\qquad\text{ for }i<d-s+1\,.
\]

We next study $H^i_\fraka(\calR)$. Since
\[
\calR=A\oplus I(k)\oplus I^2(2k)\oplus\cdots\oplus I^r(rk)\oplus\cdots\,,
\]
we have
\[
H^i_\fraka(\calR)=H^i_\fraka(A)\oplus H^i_\fraka(I)(k)\oplus H^i_\fraka(I^2)(2k)\oplus\cdots\oplus H^i_\fraka(I^r)(rk)\oplus\cdots\,.
\]
Theorem~\ref{thm:rigidity}~(1) now follow using Lemma~\ref{lem:rigidity}~(1).

Assume that $d>s$. Then, by Lemma~\ref{lem:rigidity}~(2), $H^{d-s+1}_\fraka(I^r(rk))_i \neq 0$ if and only if $i \le a+ks-k$.

Assume that $d=s$. Then, by Lemma~\ref{lem:rigidity}~(3), $H^{d-s+1}_\fraka(I^r(rk))_i \neq 0$ if and only if $-rk \le i \le a+ks-k$.

In each case, $H^{d-s+1}_\fraka(\calR)_{(gi,hi)} \neq 0$ if and only if 
\[
1\le i\le \frac{a+ks-k}{g}\,.\qedhere
\]
\end{proof}

\section{Rational singularities}
\label{section:rational}

Let $R$ be a normal domain, essentially of finite type over a field of characteristic zero, and consider a \emph{desingularization} $f\colon Z\to\Spec R$, i.e., a proper birational morphism with $Z$ a nonsingular variety. One says $R$ has \emph{rational singularities} if $R^if_*\calO_Z=0$ for each $i\ge 1$; this does not depend on the choice of the desingularization $f$. For $\NN$-graded rings, one has the following criterion due to Flenner \cite{Flenner} and Watanabe \cite{KW:kstar}.

\begin{theorem}
\label{thm:FlennerWatanabe}
Let $R$ be a normal $\NN$-graded ring which is finitely generated over a field $R_0$ of characteristic zero. Then $R$ has rational singularities if and only if it is Cohen-Macaulay, $a(R)<0$, and the localization $R_\frakp$ has rational singularities for each $\frakp\in\Spec R\smallsetminus\{R_+\}$.
\end{theorem}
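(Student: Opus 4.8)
The plan is to reduce the statement to a single cohomological computation on $X=\Proj R$, carried out on a resolution of $\Spec R$ that is compatible with the grading. Write $d=\dim R$ and let $\frakm=R_+$ denote the homogeneous maximal ideal. First I would dispose of the two ingredients that need no grading. In characteristic zero rational singularities force $R$ to be Cohen--Macaulay (and normal), and rational singularities are preserved under localization. Since the punctured spectrum $\Spec R\smallsetminus\{\frakm\}$ maps smoothly onto $X$ (it is the complement of the zero section in $\calO_X(-1)$ over the smooth locus, and a torus bundle in general), the hypothesis that $R_\frakp$ have rational singularities for every $\frakp\neq\frakm$ is equivalent to the assertion that $X$ have rational singularities. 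Thus it suffices to prove: \emph{granting that $X$ has rational singularities, $R$ has rational singularities if and only if $R$ is Cohen--Macaulay and $a(R)<0$.}

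Next I would set up the dictionary between graded local cohomology and sheaf cohomology. Because $R$ is normal it is $S_2$, so $\depth R\ge 2$ when $d\ge2$ (the cases $d\le1$ being trivial); hence $H^0_\frakm(R)=H^1_\frakm(R)=0$ and there are natural degree-preserving isomorphisms
\[
H^i_\frakm(R)_n\cong H^{i-1}(X,\calO_X(n))\qquad\text{for }i\ge2\text{ and all }n\in\ZZ.
\]
In particular $a(R)<0$ is the same as $H^{d-1}(X,\calO_X(n))=0$ for every $n\ge0$, while $R$ Cohen--Macaulay gives $H^{i-1}(X,\calO_X(n))=0$ for $2\le i\le d-1$ in all degrees.

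The heart of the argument is one push-forward computation. Let $b\colon W\to\Spec R$ be the blow-up of the vertex $\frakm$; there is a morphism $\sigma\colon W\to X$ realizing $W$ as the total space of $\calO_X(-1)$, so that $\sigma_*\calO_W=\bigoplus_{n\ge0}\calO_X(n)$ and $R^i\sigma_*\calO_W=0$ for $i>0$. As $\Spec R$ is affine, Leray for $\sigma$ yields
\[
R^ib_*\calO_W=H^i(W,\calO_W)=\bigoplus_{n\ge0}H^i(X,\calO_X(n))\qquad\text{for all }i.
\]
Since $\sigma$ is smooth, $W$ has rational singularities exactly when $X$ does; so, under our standing hypothesis, a resolution $\rho\colon V\to W$ satisfies $R\rho_*\calO_V\cong\calO_W$, and the composite $p=b\circ\rho\colon V\to\Spec R$ is a resolution with $R^ip_*\calO_V\cong R^ib_*\calO_W$. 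Combining the two displays, $R$ has rational singularities if and only if $R^ip_*\calO_V=0$ for all $i\ge1$, equivalently $H^i(X,\calO_X(n))=0$ for all $i\ge1$ and $n\ge0$, equivalently $H^j_\frakm(R)_n=0$ for all $j\ge2$ and $n\ge0$.

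It remains to read this vanishing against the dictionary. For the forward implication, rational singularities force $R$ to be Cohen--Macaulay by the fact cited above, and the case $j=d$ of the vanishing gives $a(R)<0$. For the backward implication, the Cohen--Macaulay hypothesis supplies $H^j_\frakm(R)=0$ for $2\le j\le d-1$ in \emph{all} degrees, and $a(R)<0$ supplies it for $j=d$ in degrees $n\ge0$; together these give precisely the required vanishing, whence $R$ is rational. The step I expect to be the main obstacle is the identification $\sigma_*\calO_W=\bigoplus_{n\ge0}\calO_X(n)$ together with the smoothness of $\sigma$, since $\calO_X(n)$ is in general only a reflexive rank-one sheaf rather than a line bundle when $X$ is singular and $R$ is not standard graded. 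Handling this cleanly requires either restricting to the locally factorial locus of $X$ (whose complement has codimension $\ge2$, so neither the cohomology computation nor the transfer of rational singularities through $\sigma$ is affected) or principalizing $\calO_X(\pm1)$ on a further blow-up and checking that the relevant higher direct images remain governed by $R\pi_*\calO_{\tilde X}\cong\calO_X$. The remaining inputs---existence of a resolution, the $S_2$ property, and the sheaf--local-cohomology dictionary---are standard in characteristic zero.
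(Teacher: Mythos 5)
Your overall strategy is the one the paper actually uses (in its Theorem~\ref{thm:ratsing}, of which the present statement is the special case $R_0=K$): dominate $\Spec R$ by a model $Y$ that is affine over $X=\Proj R$ with $\pi_*\calO_Y=\bigoplus_{n\ge0}\calO_X(n)$, show $Y$ has rational singularities, and then use Leray degeneration to read $H^p(Y,\calO_Y)={[H^{p+1}_{R_+}(R)]}_{\ge0}$ against Cohen--Macaulayness and $a(R)<0$; all of that bookkeeping in your proposal is correct and matches the paper. The genuine gap is exactly at the step you flag, and neither of your proposed repairs closes it. The theorem concerns arbitrary $\NN$-graded rings, not standard graded ones, and for non-standard gradings your model does not exist as described: one has $R_n\not\subseteq\frakm^n$ in general (e.g.\ in $K[x,y]$ with $\deg x=1$, $\deg y=2$, the element $y\in R_2$ is not in $\frakm^2$), so $R_{\ge n}\neq\frakm^n$, the blow-up $W$ of the vertex is not the total space of $\calO_X(-1)$, and the natural map $W\dashrightarrow X$ is only a rational map, not a morphism (in the example above it is undefined along the exceptional curve). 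Likewise the punctured spectrum is not a torus bundle over $X$: for the same $R$, the orbit map $\AA^2\smallsetminus\{0\}\to\PP^1$ fails to be smooth along $x=0$, so the smoothness of $\sigma$ that you use to propagate rationality from $X$ up to $W$ is unavailable. (The other direction of your ``equivalence''---that rationality of all $R_\frakp$, $\frakp\neq\frakm$, forces $X$ rational---does survive, via Boutot and the local splittings ${[R_r]}_0\hookrightarrow R_r$, but it is not the direction your construction needs.) Your first repair, restricting to the locally factorial locus of $X$, is invalid: rational singularities of $W$ must be verified at every point, including those over the deleted locus, and higher cohomology such as $H^{d-1}(X,\calO_X(n))$, which detects $a(R)$, is certainly not insensitive to removing a closed subset of codimension two. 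The second repair (principalizing $\calO_X(\pm1)$ on a further blow-up) is only a gesture; controlling the higher direct images there is the entire difficulty.

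The paper's substitute for your smooth $\sigma$ is the Rees ring $R^\nat=\bigoplus_{n\ge0}R_{\ge n}T^n$ of Remark~\ref{rem:natural}: $Y=\Proj R^\nat$ is the correct incarnation of the total space of $\calO_X(-1)$ for non-standard gradings, mapping properly birationally to $\Spec R$ and affinely onto $X$, and its affine charts are computed explicitly, namely ${\big[R^\nat_{rT^n}\big]}_0=R_r$ when $\deg r>n$ and ${\big[R^\nat_{rT^n}\big]}_0={[R_r]}_{\ge0}$ when $\deg r=n$. Rationality of $Y$ then follows chart by chart from the hypothesis on the $R_\frakp$ together with Watanabe's lemma that ${[R_r]}_{\ge0}$ has rational singularities whenever the $\ZZ$-graded ring $R_r$ does \cite[Lemma~2.5]{KW:kstar}; this is Lemma~\ref{lem:natural}, the ingredient your argument is missing, and note that it never passes through the singularities of $X$ itself. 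With $Y$ in place, your Leray argument and the identification $H^p(Y,\calO_Y)={[H^{p+1}_{R_+}(R)]}_{\ge0}$ go through verbatim and coincide with the paper's proof of Theorem~\ref{thm:ratsing}. In the standard graded case, where $R_{\ge n}=\frakm^n$ and your $W$ is exactly $\Proj R^\nat$, your proof is complete and is the paper's.
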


When $R$ has an isolated singularity, the above theorem gives an effective criterion for determining if $R$ has rational singularities. However, a multigraded hypersurface typically does not have an isolated singularity, and the following variation turns out to be useful:

\begin{theorem}
\label{thm:multigraded}
Let $R$ be a normal $\NN^r$-graded ring such that $R_\bszero$ is a local ring essentially of finite type over a field of characteristic zero, and $R$ is generated over $R_\bszero$ by elements 
\[
x_{11},x_{12},\dots,x_{1t_1},\quad x_{21},x_{22},\dots,x_{2t_2},\quad\dots,\quad x_{r1},x_{r2},\dots,x_{rt_r}\,,
\]
where $\deg x_{ij}$ is a positive integer multiple of the $i$-th unit vector $e_i\in\NN^r$. Then $R$ has rational singularities if and only if
\begin{enumerate}
\item $R$ is Cohen-Macaulay,
\item $R_\frakp$ has rational singularities for each $\frakp$ belonging to
\[
\Spec R\setminus\big(V(x_{11},x_{12},\dots,x_{1t_1})\cup\dots\cup V(x_{r1},x_{r2},\dots,x_{rt_r})\big), \quad\text{and}
\]
\item $\bsa(R)<\bszero$, i.e., $a(R^{\phi_i})<0$ for each coordinate projection $\phi_i\colon \NN^r\to\NN$.
\end{enumerate}
\end{theorem}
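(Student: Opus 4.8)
The plan is to reduce everything to the one–variable criterion of Theorem~\ref{thm:FlennerWatanabe}, applied to a strictly positive $\NN$–grading of $R$, and to run an induction on $r$. Throughout, write $W_i=V(x_{i1},\dots,x_{it_i})$, so that $\bigcap_i W_i=V(R_+)=\Spec R_\bszero$ while $\bigcup_i W_i$ is the larger set appearing in~(2); the gap between these two sets is exactly what the multigraded $\bsa$–invariant will be made to control. I will use Theorem~\ref{thm:FlennerWatanabe} in the form that allows $R_\bszero$ to be a local ring with rational singularities rather than a field; this mild extension is the one technical input I take for granted, and for $r=1$ it is precisely the assertion to be proved. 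Note that condition~(2) already forces $R_\bszero$ to have rational singularities: localizing on the open set where one generator of each block is invertible exhibits a smooth morphism onto $\Spec R_\bszero$, and rational singularities descend along smooth surjections in characteristic zero.

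For necessity, suppose $R$ has rational singularities. Then $R$ is Cohen–Macaulay and every localization $R_\frakp$ again has rational singularities, giving~(1) and~(2). For~(3) I argue direction by direction. Fix $i$ and regard $R$ as $\NN$–graded via $\phi_i$; then $W_i=V((R^{\phi_i})_+)$ is the relevant ``vertex'' locus, on whose complement $R$ is rational, and the degree–zero subring $(R^{\phi_i})_0$ is a direct summand of $R$, hence has rational singularities by Boutot's theorem. The one–variable criterion — now over the graded, non-local base $(R^{\phi_i})_0$ — yields $a(R^{\phi_i})<0$; equivalently, by graded local duality this says that the canonical module $\omega_R$ has no nonzero component in $i$-th degree $\le 0$, which one may alternatively read off from Grauert–Riemenschneider vanishing on a $(K^*)^r$–equivariant resolution of $\Spec R$. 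Letting $i$ vary gives $\bsa(R)<\bszero$.

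The substance is sufficiency, established by induction on $r$. Assume~(1),(2),(3), and choose the strictly positive grading $\phi=\phi_1+\dots+\phi_r$, so that $(R^\phi)_0=R_\bszero$ is local; I apply Theorem~\ref{thm:FlennerWatanabe} to the $\NN$–graded ring $R^\phi$. Cohen–Macaulayness of $R^\phi$ is~(1), and $a(R^\phi)<0$ is immediate from~(3): since $\bsa(R)<\bszero$ forces $[H^{\dim R}_\frakM(R)]_\bsn=0$ unless $\bsn<\bszero$, and every such $\bsn$ satisfies $\phi(\bsn)<0$. Rationality of $R_\bszero$ was noted above. It therefore remains only to verify that $R^\phi_\frakp$ has rational singularities for every $\frakp\not\supseteq R_+$, that is, on all of $\Spec R\setminus\bigcap_i W_i$.

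This last point is the crux. For $\frakp\notin\bigcup_i W_i$ rationality is given outright by~(2). The remaining primes lie in $\bigcup_i W_i\setminus\bigcap_i W_i$: such a $\frakp$ contains some blocks entirely but misses at least one block. Choosing, for each missed block $i$, a generator $x_{ij}\notin\frakp$ and inverting these elements trivializes the corresponding gradings and exhibits $R$, in a neighborhood of $\frakp$, as a smooth Laurent–polynomial extension $D[\,\cdots^{\pm}\,]$ of an $\NN^{r'}$–graded ring $D$ with $r'<r$. One checks that $D$ again satisfies~(1),(2),(3): it is Cohen–Macaulay because $R[\,\cdots^{-1}\,]$ is, and Cohen–Macaulayness descends along the faithfully flat map $D\to R[\,\cdots^{-1}\,]$; it satisfies~(2) because its relevant localizations are smoothly equivalent to localizations of $R$ that miss a generator of \emph{every} block, which are covered by~(2) for $R$; and it satisfies~(3) because inverting elements in the trivialized directions creates no top local cohomology in nonnegative surviving degrees, so the surviving $a$–invariants of $D$ remain negative. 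By the inductive hypothesis $D$ has rational singularities, hence so does the prime $\frakq$ of $D$ below $\frakp$, and therefore so does $R_\frakp$ by smooth descent. With all hypotheses of Theorem~\ref{thm:FlennerWatanabe} verified, $R=R^\phi$ has rational singularities, completing the induction. I expect the main obstacle to be exactly this analysis of the ``gap'' primes: it is here that the deliberate weakening of the punctured–spectrum hypothesis~(2), relative to what a single positive grading alone would require, must be compensated by the strength of the multigraded condition~(3), and the bookkeeping showing that~(1),(2),(3) persist on the auxiliary rings $D$ — in particular the persistence of negativity of the surviving $a$–invariants — is the delicate step.
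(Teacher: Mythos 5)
Your architecture matches the paper's in outline --- induct on $r$, reduce to an $\NN$-graded Flenner--Watanabe-type criterion over a local base, and identify the ``gap'' primes together with the persistence of negative $a$-invariants as the crux --- but the two steps you defer are exactly where the paper's real work lies, and as written both are genuine gaps. First, the ``mild extension'' of Theorem~\ref{thm:FlennerWatanabe} to a local base ring is not available off the shelf: it is Theorem~\ref{thm:ratsing} of the paper, proved via the Rees algebra $R^\nat$ of the filtration $R_{\ge n}$ (Remark~\ref{rem:natural}), the verification that $\Proj R^\nat$ has rational singularities (Lemma~\ref{lem:natural}), a Leray spectral sequence, and Hyry's comparison lemma (Lemma~\ref{lem:hyry}). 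Taking it for granted is taking the base case of your induction for granted. (Incidentally, that theorem does not need your added hypothesis that $R_\bszero$ be rational, and your justification that condition~(2) forces it --- inverting one generator per block giving a smooth morphism onto $\Spec R_\bszero$ --- is false in general; such a localization need not be smooth over $R_\bszero$. The correct route, if one wanted it, is Boutot, $R_\bszero$ being a direct summand.)

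Second, and more seriously, the step you yourself flag as delicate --- ``inverting elements in the trivialized directions creates no top local cohomology in nonnegative surviving degrees, so the surviving $a$-invariants of $D$ remain negative'' --- is an assertion, not an argument, and the naive mechanism behind it fails: the $a$-invariants of $D$ are computed from local cohomology supported at $\frakp$ plus the surviving blocks, a support genuinely different from $\frakM$, and one cannot relate this to $H^{\dim R}_\frakM(R)$ by localization, since inverting an element of $\frakM$ kills $H^*_\frakM(R)$ outright. The paper's mechanism is precisely Lemma~\ref{lem:hyry}: if ${[H^i_{\frakm+R_+}(R)]}_{\ge 0}=0$ for \emph{all} $i$, then the same vanishing holds with $\frakm$ replaced by an arbitrary ideal $\fraka\subseteq R_0$. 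Its hypothesis involves all cohomological degrees, so Cohen--Macaulayness (condition~(1)) is essential to this transfer in tandem with condition~(3); your sketch never uses (1) at this point and hence could not succeed as stated. Relatedly, your reduction to the degree-zero subalgebra $D$ is itself problematic: when the generators of a block have degrees $d_{ij}>1$, inverting one of them does not exhibit the localization as a Laurent polynomial extension of $D$, so the claimed smooth ascent/descent needs repair. The paper sidesteps all of this by never shrinking the ring: it regrades by the projection $\psi\colon\ZZ^r\to\ZZ^{r-1}$ (so that $\deg x_{rj}=0$) and localizes $R^\psi$ at $\frakp=\frakP\cap{[R^\psi]}_\bszero$, producing an $\NN^{r-1}$-graded ring $T$ whose underlying ring is a localization of $R$ --- whence (1) and (2) are inherited immediately --- and then deduces $\bsa(T)<\bszero$ from Lemma~\ref{lem:hyry}. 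If you replace your $D$-construction with this regrading and supply proofs of Theorem~\ref{thm:ratsing} and Lemma~\ref{lem:hyry}, your outline becomes the paper's proof.
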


Before proceeding with the proof, we record some preliminary results.

\begin{remark}
\label{rem:natural}
Let $R$ be an $\NN$-graded ring. We use $R^\nat$ to denote the Rees algebra with respect to the filtration $F_n=R_{\ge n}$, i.e., 
\[
R^\nat=F_0\oplus F_1T\oplus F_2T^2\oplus\cdots\,.
\]
When considering $\Proj R^\nat$, we use the $\NN$-grading on $R^\nat$ where ${[R^\nat]}_n=F_nT^n$. The inclusion 
$R={[R^\nat]}_0\hookrightarrow R^\nat$ gives a map
\[
\Proj R^\nat\overset{f}\to\Spec R\,.
\]
Also, the inclusions $R_n\hookrightarrow F_n$ give rise to an injective homomorphism of graded rings $R \hookrightarrow R^\nat$, which induces a surjection
\[
\Proj R^\nat\overset{\pi}\to\Proj R\,.
\]
\end{remark}

\begin{lemma}
\label{lem:natural}
Let $R$ be an $\NN$-graded ring which is finitely generated over $R_0$, and assume that $R_0$ is essentially of finite type over a field of characteristic zero.

If $R_\frakp$ has rational singularities for all primes $\frakp\in\Spec R\smallsetminus V(R_+)$, then $\Proj R^\nat$ has rational singularities.
\end{lemma}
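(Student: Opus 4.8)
The statement I need to prove is Lemma~\ref{lem:natural}: if $R_\frakp$ has rational singularities for every prime $\frakp\in\Spec R\smallsetminus V(R_+)$, then $\Proj R^\nat$ has rational singularities. The plan is to understand $\Proj R^\nat$ geometrically via the two maps recorded in Remark~\ref{rem:natural}, namely $f\colon\Proj R^\nat\to\Spec R$ and $\pi\colon\Proj R^\nat\to\Proj R$, and to reduce the rational singularity condition on $\Proj R^\nat$ to the given hypothesis on the punctured spectrum of $R$. First I would cover $\Proj R^\nat$ by standard affine charts and identify the local rings appearing there. Since rational singularities is a local and affine-local condition, it suffices to check that the local ring $\calO_{\Proj R^\nat,\,\frakP}$ has rational singularities for each point $\frakP$.

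\textbf{Localizing the charts.} The key observation is that the two morphisms $f$ and $\pi$ together pin down where every point of $\Proj R^\nat$ lives. A point $\frakP\in\Proj R^\nat$ maps under $f$ to some prime $\frakp=f(\frakP)\in\Spec R$. If $\frakp\neq R_+$, i.e. $\frakp\in\Spec R\smallsetminus V(R_+)$, then the hypothesis tells us $R_\frakp$ has rational singularities; I would then argue that on the relevant affine chart of $\Proj R^\nat$ the structure sheaf is, after localizing, a localization of a polynomial extension of $R_\frakp$ (the Rees-type charts are generated over $R$ by fractions), so $\calO_{\Proj R^\nat,\,\frakP}$ inherits rational singularities from $R_\frakp$ because rational singularities is stable under localization and under adjoining polynomial/smooth variables. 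The remaining, and only dangerous, case is when $f(\frakP)=R_+$, i.e. $\frakP$ lies in the fiber $f^{-1}(R_+)$ over the vertex. Here the hypothesis on $R_\frakp$ gives nothing, since $R_{R_+}$ is precisely the excluded point.

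\textbf{The main obstacle.} The crux is therefore the fiber over the irrelevant ideal $R_+$. On this locus one instead exploits the other map $\pi\colon\Proj R^\nat\to\Proj R$. The essential point is that $\Proj R^\nat$ is the blow-up of $\Spec R$ along the ideal sheaf cutting out $R_+$ (or more precisely, the projection $\pi$ realizes $\Proj R^\nat$ over $\Proj R$ as a space built from the filtration $F_n=R_{\ge n}$, and on $\Proj R$ the sheaf condition only sees the punctured spectrum). I would argue that for $\frakP$ in the exceptional fiber, $\pi(\frakP)\in\Proj R$ corresponds to a homogeneous prime of $R$ not containing $R_+$, so that the local ring $\calO_{\Proj R,\,\pi(\frakP)}$ is a localization of $R$ at a prime in $\Spec R\smallsetminus V(R_+)$ — again covered by the hypothesis — and $\calO_{\Proj R^\nat,\,\frakP}$ is obtained from it by passing to the associated graded / Rees construction of a principal-type filtration, hence is smooth or rational over it. The technical difficulty I expect to spend the most effort on is making the chart-by-chart computation precise: explicitly writing the affine pieces of $\Proj R^\nat$ in terms of generators $x_i\in R_1$ (after possibly reducing to the standard graded case or choosing a suitable Veronese) and verifying that each localized coordinate ring is, up to smooth/polynomial extension and localization, one of the rings $R_\frakp$ with $\frakp\neq R_+$. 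Once that dictionary is in place, the stability of rational singularities under localization and smooth extension closes the argument; the case $f(\frakP)=R_+$ is handled entirely through $\pi$ rather than $f$, which is why having both projections from Remark~\ref{rem:natural} is indispensable.
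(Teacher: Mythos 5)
Your overall skeleton---cover $\Proj R^\nat$ by charts, observe that $f$ is an isomorphism over $\Spec R\smallsetminus V(R_+)$ so the hypothesis disposes of those points, and isolate the fiber over the irrelevant locus as the real issue---matches the shape of the paper's argument, but your treatment of that crucial fiber has a genuine gap. You assert that for $\frakP$ in the exceptional locus, $\calO_{\Proj R^\nat,\,\frakP}$ is obtained from $\calO_{\Proj R,\,\pi(\frakP)}$ by a Rees-type construction and is ``smooth or rational over it.'' That is exactly the point that needs proof, and the smoothness claim is false in general: the lemma makes no standard-gradedness assumption (the filtration $F_n=R_{\ge n}$ and the ring $R^\nat$ are introduced precisely because $R$ need not be generated in degree one), and then $\pi$ is not an $\AA^1$-bundle over $\Proj R$. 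Concretely, on a chart $D_+(rT^n)$ with $r$ homogeneous of degree exactly $n$, one computes ${\big[R^\nat_{rT^n}\big]}_0={\big[R_r\big]}_{\ge 0}$, the nonnegatively graded part of the $\ZZ$-graded ring $R_r$. Your hypothesis does give that $R_r$ has rational singularities (every prime of $R_r$ avoids $V(R_+)$, since $r\in R_+$), but the passage from $R_r$ to ${[R_r]}_{\ge 0}$ is a nontrivial theorem---this is \cite[Lemma~2.5]{KW:kstar}, which the paper invokes at exactly this point---and your proposal neither identifies this key lemma nor supplies a substitute argument.

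A secondary error: you claim $\calO_{\Proj R,\,\pi(\frakP)}$ ``is a localization of $R$ at a prime in $\Spec R\smallsetminus V(R_+)$.'' It is not: local rings of $\Proj R$ are degree-zero parts of homogeneous localizations, hence direct summands of rings $R_r$ rather than localizations of $R$; to endow them with rational singularities you would need Boutot's theorem or, again, Watanabe's lemma. (Also, the dangerous locus is all of $f^{-1}(V(R_+))$, not just the fiber over the single prime $R_+$, since $R_0$ is only assumed essentially of finite type over a field and $V(R_+)\cong\Spec R_0$ may be positive-dimensional.) The paper's proof avoids all case analysis on $f(\frakP)$ and makes no use of $\pi$ in this lemma: it simply computes each chart, ${\big[R^\nat_{rT^n}\big]}_0=R+\tfrac{1}{r}R_{\ge n}+\tfrac{1}{r^2}R_{\ge 2n}+\cdots$, which equals $R_r$ when $\deg r>n$ (handled directly by the hypothesis) and equals ${[R_r]}_{\ge 0}$ when $\deg r=n$ (handled by the hypothesis together with \cite[Lemma~2.5]{KW:kstar}). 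Until you prove or correctly cite a statement of that last type, your argument for the exceptional fiber does not close.
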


\begin{proof}
Note that $\Proj R^\nat$ is covered by affine open sets $D_+(r T^n)$ for integers $n\ge 1$ and homogeneous elements $r\in R_{\ge n}$. Consequently, it suffices to check that ${[R^\nat_{r T^n}]}_0$ has rational singularities. Next, note that
\[
\big[R^\nat_{r T^n}\big]_0=R+\frac{1}{r}{[R]}_{\ge n}+\frac{1}{r^2}{[R]}_{\ge 2n}+\cdots.
\]
In the case $\deg r>n$, the ring above is simply $R_r$, which has rational singularities by the hypothesis of the lemma. If $\deg r=n$, then 
\[
{\big[R^\nat_{r T^n}\big]}_0={\big[R_r\big]}_{\ge 0}\,.
\]
The $\ZZ$-graded ring $R_r$ has rational singularities and so, by \cite[Lemma 2.5]{KW:kstar}, the ring ${[R_r]}_{\ge 0}$ has rational singularities as well.
\end{proof}

\begin{lemma}\cite[Lemma~2.3]{Hyry:TAMS}
\label{lem:hyry}
Let $R$ be an $\NN$-graded ring which is finitely generated over a local ring $(R_0,\frakm)$. Suppose ${[H^i_{\frakm+R_+}(R)]}_{\ge 0}=0$ for all $i\ge 0$. Then, for all ideals $\fraka$ of $R_0$, one has
\[
{\big[H^i_{\fraka+R_+}(R)\big]}_{\ge 0}=0\qquad\text{ for all }i\ge 0\,.
\]
\end{lemma}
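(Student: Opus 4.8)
The plan is to study both local cohomology modules through the intermediate ideal $R_+$ and then to isolate an assertion that is free of $\fraka$. Since $V(\fraka+R_+)=V(\fraka)\cap V(R_+)$, one has $\Gamma_{\fraka+R_+}=\Gamma_\fraka\circ\Gamma_{R_+}$, so the Grothendieck spectral sequence of this composite functor reads
\[
E_2^{p,q}=H^p_\fraka\big(H^q_{R_+}(R)\big)\Longrightarrow H^{p+q}_{\fraka+R_+}(R)\,,
\]
and likewise with $\fraka$ replaced by $\frakm$. Set $N^q=H^q_{R_+}(R)$. Because $\fraka$ and $\frakm$ are generated by elements of $R_0$, i.e.\ of degree $0$, the \v{C}ech complex computing $H^p_\fraka(-)$ is a complex of graded modules with degree-preserving differentials; consequently local cohomology is computed one graded strand at a time,
\[
{\big[H^p_\fraka(N^q)\big]}_n=H^p_\fraka\big(N^q_n\big)\qquad(n\in\ZZ)\,,
\]
where $N^q_n$ denotes the degree-$n$ component of $N^q$, viewed as an $R_0$-module. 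Since $R$ is finitely generated over $R_0$, each $N^q_n$ is a finitely generated $R_0$-module.

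I would then reduce the lemma to the single, $\fraka$-independent claim that $N^q_n=0$ for every $q$ and every $n\ge 0$. Granting this, each $E_2$-term of the $\fraka$-spectral sequence in nonnegative degree is $H^p_\fraka(N^q_n)=H^p_\fraka(0)=0$, whence ${[H^{p+q}_{\fraka+R_+}(R)]}_{\ge 0}=0$; this is exactly the desired conclusion, and it holds for all ideals $\fraka\subseteq R_0$ simultaneously. The hypothesis enters only in establishing the claim: for each fixed $n\ge 0$, the $\frakm$-spectral sequence in degree $n$ reads $H^p_\frakm(N^q_n)\Rightarrow {[H^{p+q}_{\frakm+R_+}(R)]}_n=0$, the abutment vanishing by assumption.

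Phrasing this in the derived category, let $C^\bullet_n$ denote the complex $R\Gamma_{R_+}(R)$ in degree $n$; it is a complex of $R_0$-modules with finitely generated cohomology $N^q_n$, and the hypothesis says precisely that $R\Gamma_\frakm(C^\bullet_n)$ is acyclic. I would then invoke the fact that over a Noetherian local ring $(R_0,\frakm)$ the functor $R\Gamma_\frakm$ is conservative on $D^b_{fg}(R_0)$: a complex with finitely generated cohomology whose $\frakm$-local cohomology vanishes is itself acyclic. This yields $N^q_n=0$ for all $q$ and all $n\ge 0$, completing the argument.

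The main obstacle is making this last implication rigorous. A spectral sequence whose abutment is zero does not by itself force every $E_2$-term $H^p_\frakm(N^q_n)$ to vanish, owing to possible cancellation among the higher differentials; the extremal-corner argument that disposes of a single module fails for a complex, since an outgoing differential from $H^{\dim N^q_n}_\frakm(N^q_n)$ can land in a nonzero group attached to a lower cohomological degree. What rescues the argument is finite generation over the local ring: for $0\ne M$ finitely generated one has $H^{\dim M}_\frakm(M)\ne 0$, and, packaged globally, $\frakm$-adic completion $C\mapsto\widehat{C}=C\otimes_{R_0}\widehat{R_0}$ is exact on finitely generated modules and faithful (as $\widehat{R_0}$ is faithfully flat over $R_0$), so $R\Gamma_\frakm(C)\simeq 0$ forces $\widehat{C}\simeq 0$ and hence $C\simeq 0$. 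I expect this conservativity step, rather than the formal spectral-sequence bookkeeping, to be where the real content lies.
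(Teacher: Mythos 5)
The paper itself gives no proof of this lemma---it is imported verbatim from Hyry \cite{Hyry:TAMS}---so your argument can only be judged on its own merits and against Hyry's original. Your architecture is the natural one and is sound: $\Gamma_{\fraka+R_+}=\Gamma_\fraka\circ\Gamma_{R_+}$ yields the composite-functor spectral sequence; because $\fraka$ and $\frakm$ are generated by degree-zero elements, the \v{C}ech complexes split into graded strands, so everything can be checked one degree at a time; the strands $N^q_n={[H^q_{R_+}(R)]}_n$ are finitely generated over $R_0$ (this is the standard graded finiteness result, essentially in \cite{GW1}, and deserves a citation rather than a bare assertion); and the whole lemma correctly reduces to the $\fraka$-independent claim $N^q_n=0$ for all $q$ and all $n\ge0$, i.e., to the case $\fraka=(0)$. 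You are also right, and commendably explicit, that the naive extremal-corner argument in the $\frakm$-spectral sequence fails for complexes, and that the crux is the conservativity of $R\Gamma_\frakm$ on complexes with bounded finitely generated cohomology.

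The one genuine defect is your justification of that crux. The fact you invoke is true, but the closing sentence---completion is exact and faithful, ``so $R\Gamma_\frakm(C)\simeq0$ forces $\hat{C}\simeq0$''---is a non sequitur: faithful flatness of $\hat{R}_0$ gives the implication $\hat{C}\simeq0\Rightarrow C\simeq0$ and the compatibility $R\Gamma_{\hat{\frakm}}(\hat{C})\simeq R\Gamma_\frakm(C)\otimes_{R_0}\hat{R}_0$, but it provides no passage from $R\Gamma_\frakm(C)\simeq0$ to $\hat{C}\simeq0$; that passage is precisely the content of the conservativity statement, and Grothendieck nonvanishing $H^{\dim M}_\frakm(M)\neq0$ for a single module cannot substitute for it, for exactly the cancellation reason you identified. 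The correct repair is short but uses duality: by the flat base change just described one may assume $R_0$ complete, hence admitting a normalized dualizing complex $\omega^\bullet$; local duality gives $\Hom_{R_0}\big(R\Gamma_\frakm(C),E\big)\simeq R\Hom_{R_0}(C,\omega^\bullet)$, where $E$ is the injective hull of the residue field, so $R\Gamma_\frakm(C)\simeq0$ forces $R\Hom_{R_0}(C,\omega^\bullet)\simeq0$, and biduality $C\simeq R\Hom_{R_0}\big(R\Hom_{R_0}(C,\omega^\bullet),\omega^\bullet\big)$, valid for complexes with bounded finitely generated cohomology, forces $C\simeq0$. (Equivalently one may cite Greenlees--May duality, or Foxby's small-support theory, for the same conclusion.) With the crux either cited properly or patched by this duality argument, your proof is complete; as written, the last step is asserted rather than proved, and the proffered reasoning for it would not survive scrutiny.
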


We are now in a position to prove the following theorem, which is a variation of \cite[Satz~3.1]{Flenner}, \cite[Theorem~2.2]{KW:kstar}, and \cite[Theorem~1.5]{Hyry:MM}.

\begin{theorem}
\label{thm:ratsing}
Let $R$ be an $\NN$-graded normal ring which is finitely generated over~$R_0$, and assume that $R_0$ is a local ring essentially of finite type over a field of characteristic zero. Then $R$ has rational singularities if and only if
\begin{enumerate}
\item $R$ is Cohen-Macaulay,
\item $R_\frakp$ has rational singularities for all $\frakp\in\Spec R\smallsetminus V(R_+)$, and
\item $a(R)<0$.
\end{enumerate}
\end{theorem}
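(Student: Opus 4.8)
The plan is to reduce the rational-singularity property of $R$ to the vanishing of the higher direct images of $\calO_Y$ under the morphism $f\colon Y=\Proj R^\nat\to\Spec R$ of Remark~\ref{rem:natural}, and then to convert that vanishing into the local-cohomology conditions (1) and (3) by means of the change-of-support Lemma~\ref{lem:hyry}. Throughout write $D=\dim R$ and $\frakM=\frakm+R_+$ for the homogeneous maximal ideal. Since $\Spec R$ is affine and $f$ is proper, one has $R^if_*\calO_Y=H^i(Y,\calO_Y)$ for all $i$.

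The technical heart is a computation of these groups in terms of $R$. Using the explicit description of $R^\nat$ together with Serre's formula for the cohomology of $\Proj$, I would establish
\[
R^if_*\calO_Y\cong\big[H^{i+1}_{R_+}(R)\big]_{\ge0}\qquad(i\ge1),
\]
so that $R^if_*\calO_Y=0$ for every $i\ge1$ if and only if $[H^j_{R_+}(R)]_{\ge0}=0$ for every $j\ge2$. I expect this identification to be the main obstacle, since it requires computing the degree-zero part of $H^{i+1}_{R^\nat_+}(R^\nat)$ for the natural Rees algebra and matching it, in the internal $R$-grading, with the nonnegatively graded part of $H^{i+1}_{R_+}(R)$.

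Granting this, the forward implication runs as follows. Conditions (1) and (3) say exactly that $H^i_\frakM(R)=0$ for $i<D$ and $[H^D_\frakM(R)]_{\ge0}=0$, hence $[H^i_\frakM(R)]_{\ge0}=0$ for all $i$. Applying Lemma~\ref{lem:hyry} with $\fraka=0$ gives $[H^i_{R_+}(R)]_{\ge0}=0$ for all $i$, whence $R^if_*\calO_Y=0$ for $i\ge1$ by the computation above. Condition (2) together with Lemma~\ref{lem:natural} shows that $Y$ has rational singularities; choosing a resolution $g\colon Z\to Y$, one has $R^qg_*\calO_Z=0$ for $q\ge1$ and $g_*\calO_Z=\calO_Y$. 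The Leray spectral sequence for $h=f\circ g\colon Z\to\Spec R$ then collapses to $R^ih_*\calO_Z\cong R^if_*\calO_Y=0$ for $i\ge1$, so $R$ has rational singularities.

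For the converse, that $R$ has rational singularities forces (1), since rational singularities are Cohen-Macaulay in characteristic zero, and forces (2), since the property localizes. Using (2) and Lemma~\ref{lem:natural} again, $Y$ has rational singularities; combined with rationality of $R$ and the Leray sequence, this yields $R^if_*\calO_Y=0$ for $i\ge1$, hence $[H^j_{R_+}(R)]_{\ge0}=0$ for $j\ge2$. To recover $a(R)<0$, i.e. $[H^D_\frakM(R)]_{\ge0}=0$, I would feed this into the composed-functor spectral sequence $H^p_\frakm(H^q_{R_+}(R))\Rightarrow H^{p+q}_\frakM(R)$: the functor $\Gamma_\frakm$ preserves the internal grading, so every contribution with $q\ge2$ vanishes in nonnegative degrees, while $H^0_{R_+}(R)=H^1_{R_+}(R)=0$ by the normality, hence $S_2$, property of $R$, leaving $[H^D_\frakM(R)]_{\ge0}=0$. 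This completes the plan; the only genuinely delicate point is the graded identification of $R^if_*\calO_Y$ with $[H^{i+1}_{R_+}(R)]_{\ge0}$.
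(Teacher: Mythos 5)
Your outline of the ``if'' direction coincides with the paper's own proof: the same morphism $f\colon Y=\Proj R^\nat\to\Spec R$ from Remark~\ref{rem:natural}, the same appeal to Lemma~\ref{lem:natural} to see that $Y$ has rational singularities, the same Leray collapse reducing everything to $H^p(Y,\calO_Y)=0$ for $p\ge1$, and the same use of Lemma~\ref{lem:hyry} (with $\fraka=0$) to pass from ${[H^p_{\frakm+R_+}(R)]}_{\ge0}=0$ --- which is exactly (1) together with (3) --- to ${[H^p_{R_+}(R)]}_{\ge0}=0$. The one step you defer, the identification $H^p(Y,\calO_Y)\cong{[H^{p+1}_{R_+}(R)]}_{\ge0}$ for $p\ge1$, which you yourself call the main obstacle, is precisely where the paper works, and it disposes of it more cheaply than your plan of computing ${\big[H^{p+1}_{R^\nat_+}(R^\nat)\big]}_0$: it uses the second morphism of Remark~\ref{rem:natural}, $\pi\colon Y\to X=\Proj R$. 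This $\pi$ is an affine morphism (the preimage of $D_+(r)$, for $r\in R_n$, is $D_+(rT^n)$), and the computation already recorded in the proof of Lemma~\ref{lem:natural}, namely ${\big[R^\nat_{rT^n}\big]}_0={[R_r]}_{\ge0}$ when $\deg r=n$, says exactly that $\pi_*\calO_Y=\bigoplus_{n\ge0}\calO_X(n)$. Hence
\[
H^p(Y,\calO_Y)=H^p(X,\pi_*\calO_Y)=\bigoplus_{n\ge0}H^p(X,\calO_X(n))={\big[H^{p+1}_{R_+}(R)\big]}_{\ge0}\qquad(p\ge1)
\]
by the usual Serre--Grothendieck correspondence on $X$; no local cohomology of the Rees ring $R^\nat$ is needed. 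So your approach is the right one and the missing step is genuinely available, but as written the proposal does not contain a proof of its self-identified key point.

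For the converse, you supply more detail than the paper, which simply declares (1)--(3) straightforward; however, your justification at $q=1$ is incorrect as stated. Normality (i.e., $S_2$) yields $H^0_{R_+}(R)=H^1_{R_+}(R)=0$ only when $\height R_+\ge2$; since $R_0$ is merely local, one can have $\height R_+=1$ (for instance $R=R_0[x]$ with $\dim R_0\ge1$), and then $H^1_{R_+}(R)\neq0$. What your spectral sequence actually needs is only ${[H^1_{R_+}(R)]}_{\ge0}=0$ --- since $\frakm$ is generated in degree zero, $H^p_\frakm$ acts degreewise on the internal grading, as you note for $q\ge2$ --- and this weaker statement does hold: the graded pieces of $H^1_{R_+}(R)$ are finitely generated $R_0$-modules vanishing in all large degrees, so $\bigoplus_{n\ge0}H^0(X,\calO_X(n))$ is a finite birational extension of $R$ inside its fraction field, hence equals $R$ by normality, which is ${[H^1_{R_+}(R)]}_{\ge0}=0$. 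With that repair your derivation of $a(R)<0$ goes through; alternatively one can simply quote the known negativity of the $a$-invariant for graded rings with rational singularities, which is evidently what the paper has in mind.
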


\begin{proof}
It is straightforward to see that conditions~(1)--(3) hold when $R$ has rational singularities, and we focus on the converse. Consider the morphism
\[
Y=\Proj R^\nat\overset{f}\to\Spec R
\]
as in Remark~\ref{rem:natural}. Let $g\colon Z \to Y$ be a desingularization of $Y$; the composition
\[
Z\overset{g}\to Y\overset{f}\to\Spec R
\]
is then a desingularization of $\Spec R$. Note that $Y=\Proj R^\nat$ has rational singularities by Lemma~\ref{lem:natural}, so
\[
g_*{\calO}_Z=\calO_Y\quad\text{ and }\quad R^q g_*\calO_Z=0\qquad\text{ for all } q\ge 1\,.
\]
Consequently the Leray spectral sequence
\[
E_2^{p,q}=H^p(Y,R^q g_*\calO_Z)\Longrightarrow H^{p+q}(Z,\calO_Z)
\]
degenerates, and we get $H^p(Z,\calO_Z)=H^p(Y,\calO_Y)$ for all $p\ge 1$. Since $\Spec R$ is affine, we also have $R^p(g\circ f)_*\calO_Z=H^p(Z,\calO_Z)$. To prove that $R$ has rational singularities, it now suffices to show that $H^p(Y,{\calO}_Y)=0$ for all $p\ge 1$. Consider the map $\pi\colon Y\to X=\Proj R$. We have
\[
H^p(Y,\calO_Y)=H^p(X,\pi_*\calO_X)=\bigoplus_{n\ge 0}H^p(X,\calO_X(n))={\big[H^{p+1}_{R_+}(R)\big]}_{\ge 0}\,.
\]
By condition~(1), we have ${[H^p_{\frakm+R_+}(R)]}_{\ge 0}=0$ for all $p\ge 0$, and so Lemma~\ref{lem:hyry} implies that ${[H^p_{R_+}(R)]}_{\ge 0}=0$ for all $p\ge 0$ as desired.
\end{proof}

\begin{proof}[Proof of theorem~\ref{thm:multigraded}]
If $R$ has rational singularities, it is easily seen that conditions~(1)--(3) must hold. For the converse, we proceed by induction on $r$. The case $r=1$ is Theorem~\ref{thm:ratsing} established above, so assume $r\ge 2$. It suffices to show that $R_\frakM$ has rational singularities where $\frakM$ is the homogeneous maximal ideal of $R$. Set
\[
\frakm=\frakM\cap{\big[R^{\phi_r}\big]}_0\,,
\]
and consider the $\NN$-graded ring $S$ obtained by inverting the multiplicative set ${[R^{\phi_r}]}_0
\smallsetminus\frakm$ in $R^{\phi_r}$. Since $R_\frakM$ is a localization of $S$, it suffices to show that $S$ has rational singularities. Note that $a(S)=a(R^{\phi_r})$, which is a negative integer by (1). Using Theorem~\ref{thm:ratsing}, it is therefore enough to show that $R_\frakP$ has rational singularities for all $\frakP\in\Spec R\smallsetminus V(x_{r1},x_{r2},\dots,x_{rt_r})$. Fix such a prime $\frakP$, and let
\[
\psi\colon\ZZ^r\to\ZZ^{r-1}
\]
be the projection to the first $r-1$ coordinates. Note that $R^\psi$ is the ring $R$ regraded such that $\deg x_{rj}=0$, and the degrees of $x_{ij}$ for $i<r$ are unchanged. Set
\[
\frakp=\frakP\cap{\big[R^\psi\big]}_\bszero\,, 
\]
and let $T$ be the ring obtained by inverting the multiplicative set ${[R^\psi]}_\bszero\smallsetminus\frakp$ in $R^\psi$. It suffices to show that $T$ has rational singularities. Note that $T$ is an $\NN^{r-1}$-graded ring defined over a local ring $(T_\bszero,\frakp)$, and that it has homogeneous maximal ideal $\frakp+\frakb T$ where
\[
\frakb={(R^{\psi})}_+=(x_{ij}\mid i<r)R\,. 
\]
Using the inductive hypothesis, it remains to verify that $\bsa(T)<\bszero$. By condition~(1), for all integers $1\le j\le r-1$, we have
\[
{\big[H^i_\frakM(R)^{\phi_j}\big]}_{\ge 0}=0\qquad\text{ for all }i\ge 0\,,
\]
and using Lemma~\ref{lem:hyry} it follows that 
\[
{\big[H^i_{\frakp+\frakb}(R)^{\phi_j}\big]}_{\ge 0}=0\qquad\text{ for all }i\ge 0\,.
\]
Consequently $a(T^{\phi_j})<0$ for $1\le j\le r-1$, which completes the proof.
\end{proof}

\section{F-regularity}
\label{section:Freg}

For the theory of tight closure, we refer to the papers \cite{HHjams, HHbasec} and \cite{HHjalg}. We summarize results about F-rational and F-regular rings:

\begin{theorem} The following hold for rings of prime characteristic.
\begin{enumerate}
\item Regular rings are F-regular.
\item Direct summands of F-regular rings are F-regular.
\item F-rational rings are normal; an F-rational ring which is a homomorphic image of a Cohen-Macaulay ring is Cohen-Macaulay.
\item F-rational Gorenstein rings are F-regular.
\item Let $R$ be an $\NN$-graded ring which is finitely generated over a field $R_0$. If $R$ is weakly F-regular, then it is F-regular.
\end{enumerate}
\end{theorem}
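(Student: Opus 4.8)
The plan is to treat the five assertions one at a time; each is a cornerstone of tight closure theory, so the proposal is to recall the operative definitions and then either reproduce the brief standard argument or cite the relevant source among \cite{HHjams, HHbasec, HHjalg}. Recall that for a Noetherian ring $R$ of prime characteristic $p$, with $R^\circ$ the complement of the union of the minimal primes, an element $x$ lies in the tight closure $I^*$ of an ideal $I$ if there is some $c\in R^\circ$ with $cx^q\in I^{[q]}$ for all large $q=p^e$, where $I^{[q]}$ is generated by the $q$-th powers of elements of $I$. Then $R$ is \emph{weakly F-regular} if $I^*=I$ for every ideal $I$, \emph{F-regular} if every localization is weakly F-regular, and \emph{F-rational} if $I^*=I$ for every ideal generated by a system of parameters.

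For (1), I would invoke Kunz's theorem that a Noetherian ring of characteristic $p$ is regular precisely when the Frobenius endomorphism is flat. Flatness of Frobenius forces the tight closure operation to be trivial, so every ideal of a regular ring is tightly closed and the ring is weakly F-regular; since any localization of a regular ring is again regular, this property passes to all localizations and regular rings are F-regular. For (2), suppose $R$ is a direct summand of an F-regular ring $S$, with $R$-linear retraction $\rho\colon S\to R$. Given an ideal $I\subseteq R$ and $x\in I^*$, persistence of tight closure under $R\hookrightarrow S$ places $x$ in $(IS)^*=IS$, and applying $\rho$ gives $x=\rho(x)\in\rho(IS)\subseteq I$ by $R$-linearity. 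Thus $R$ is weakly F-regular; as the splitting and the F-regularity of $S$ both localize, $R$ is F-regular.

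For (3) and (4) I would appeal to the foundational results of Hochster and Huneke. That F-rational rings are normal, and that an F-rational homomorphic image of a Cohen-Macaulay ring is Cohen-Macaulay, rests on their analysis of parameter ideals and phantom homology; the Cohen-Macaulay conclusion is equivalent to the vanishing of the tight closure of zero in the top local cohomology module. For (4), in a Gorenstein local ring the residue of a system of parameters has a one-dimensional socle, and a socle-capturing argument upgrades tight-closedness of parameter ideals to that of all ideals; since the Gorenstein and F-rational properties both localize, an F-rational Gorenstein ring is weakly F-regular at every localization, hence F-regular.

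The genuine difficulty is (5): this is the theorem of Lyubeznik and Smith that weak F-regularity, F-regularity, and strong F-regularity all coincide for $\NN$-graded rings finitely generated over a field. Here the graded hypothesis is indispensable, since the equivalence of these notions is a well-known open problem in general, and the proof exploits the grading to control the test ideal and establish that the non-F-regular locus is empty. I expect this to be the principal obstacle, and would cite the Lyubeznik-Smith theorem rather than attempt to reprove it.
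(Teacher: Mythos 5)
Your proposal is correct and matches the paper, whose ``proof'' consists precisely of the citations you give: (1) and (2) are \cite[Theorem~4.6]{HHjams} and \cite[Proposition~4.12]{HHjams}, (3) and (4) are \cite[Theorem~4.2]{HHbasec} and \cite[Corollary~4.7]{HHbasec}, and (5) is \cite[Corollary~4.4]{LS}. Your supplementary sketches (Kunz's theorem plus localization for (1), splitting plus persistence for (2), the Gorenstein socle argument for (4)) are the standard arguments and are accurate, so there is nothing to correct.
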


\begin{proof}
For (1) and (2) see \cite[Theorem~4.6]{HHjams} and \cite[Proposition~4.12]{HHjams} respectively; (3) is part of \cite[Theorem~4.2]{HHbasec}, and for (4) see \cite[Corollary~4.7]{HHbasec}, Lastly, (5) is \cite[Corollary~4.4]{LS}.
\end{proof}

The characteristic zero aspects of tight closure are developed in \cite{HHchar0}. Let $K$ be a field of characteristic zero. A finitely generated $K$-algebra $R=K[x_1,\dots,x_m]/\fraka$ is of \emph{F-regular type} if there exists a finitely generated $\ZZ$-algebra $A\subseteq K$, and a finitely generated free $A$-algebra
\[
R_A=A[x_1,\dots,x_m]/\fraka_A\,,
\]
such that $R\cong R_A \otimes_A K$ and, for all maximal ideals $\mu$ in a Zariski dense subset of $\Spec A$, the fiber rings $R_A \otimes_A A/\mu$ are F-regular rings of characteristic $p>0$. Similarly, $R$ is of \emph{F-rational type} if for a dense subset of $\mu$, the fiber rings $R_A \otimes_A A/\mu$ are F-rational. Combining results from \cite{Hara,HW,MS,Smith:ratsing} one has:

\begin{theorem}
\label{thm:HSWMS}
Let $R$ be a ring which is finitely generated over a field of characteristic zero. Then $R$ has rational singularities if and only if it is of F-rational type. If $R$ is $\QQ$-Gorenstein, then it has log terminal singularities if and only if it is of F-regular type.
\end{theorem}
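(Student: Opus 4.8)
The plan is to establish both correspondences by reduction to characteristic $p$, using resolution of singularities in characteristic zero together with the vanishing theorems available there, and transporting the resulting geometry to the fibers of a suitable spreading-out. First I would fix a finitely generated $\ZZ$-subalgebra $A\subseteq K$ over which $R$, a resolution $\pi\colon Z\to\Spec R$ (and, for the second statement, a log resolution together with the canonical divisor and its discrepancies) and all the relevant cohomology sheaves are defined. By generic flatness and cohomology-and-base-change, the numerical and vanishing data computed in characteristic zero are then inherited by the fibers $R_\mu=R_A\otimes_A A/\mu$ for $\mu$ in a Zariski dense set of closed points of $\Spec A$; this is the mechanism that converts statements about $R$ into statements about almost all $p$.

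For the equivalence between rational singularities and F-rational type, the implication \emph{F-rational type $\Rightarrow$ rational singularities} is Smith's theorem \cite{Smith:ratsing}: an excellent F-rational ring that is a homomorphic image of a Cohen-Macaulay ring is pseudo-rational, hence has rational singularities in the geometric setting, and by base change a dense set of F-rational fibers forces $R$ itself to have rational singularities. The reverse implication, due to Hara \cite{Hara} and Mehta-Srinivas \cite{MS}, is the deeper one. Here I would use that rational singularities, via Grauert-Riemenschneider vanishing, give $R^i\pi_*\calO_Z=0$ and $R^i\pi_*\omega_Z=0$ for $i>0$; after reduction these vanishings survive on the fibers for almost all $\mu$. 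The key input is then to identify the tight closure of zero in the top local cohomology module $H^{\dim R}_\frakm(R_\mu)$ with the kernel of the natural map induced by $\pi_\mu$ to the local cohomology of the resolution, and to use the Frobenius action together with the inherited vanishing to force this kernel to be zero; this is exactly the statement that $R_\mu$ is F-rational.

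The log terminal versus F-regular type equivalence (Hara-Watanabe \cite{HW}) follows the same blueprint, with the structure sheaf replaced by sheaves twisted by multiples of the canonical divisor. Since $R$ is $\QQ$-Gorenstein, one has the discrepancy formula $K_Z=\pi^*K_R+\sum a_iE_i$, and log terminality is the condition $a_i>-1$ for all exceptional divisors. The replacement for Grauert-Riemenschneider is Kawamata-Viehweg vanishing, applied to the round-up $\lceil\pi^*K_R-K_Z\rceil$; the discrepancy condition is precisely what makes the relevant divisor positive up to the boundary so that the vanishing applies. Transporting this to the fibers and matching it against the Frobenius-splitting criterion for strong F-regularity gives both directions: log terminality forces the reductions to be F-regular, while failure of log terminality produces, after reduction, an obstruction to the splitting needed for F-regularity. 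The cleanest modern packaging of this step is the identification of the multiplier ideal with the characteristic-$p$ test ideal for almost all $p$.

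I expect the main obstacle to be the Hara-Mehta-Srinivas direction, rational singularities $\Rightarrow$ F-rational type. The difficulty is twofold. First, the vanishing theorems of Grauert-Riemenschneider and Kawamata-Viehweg are genuinely characteristic-zero phenomena and can fail for small $p$, so one must argue for a dense set of $\mu$ and keep careful track of which finitely many primes are excluded during spreading-out. Second, one must make precise the comparison between the purely algebraic tight-closure operation on local cohomology and the geometric kernel coming from the resolution, which requires local duality and a careful analysis of the Frobenius action on $H^{\dim R}_\frakm(R_\mu)$.
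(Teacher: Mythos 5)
The paper does not actually prove this theorem: it is quoted as a known result, ``combining results from \cite{Hara,HW,MS,Smith:ratsing}'', with no argument supplied, and your outline is a faithful reconstruction of exactly those cited sources --- Smith \cite{Smith:ratsing} for F-rational type $\Rightarrow$ rational, Hara \cite{Hara} and Mehta--Srinivas \cite{MS} for the converse via reduction mod $p$ of the resolution and the vanishing theorems, and Hara--Watanabe \cite{HW} for the klt/F-regular-type equivalence. So your proposal takes essentially the same approach as the paper (deferral to these four references), fleshed out with a broadly accurate sketch of their internal strategies.
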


\begin{proposition}
\label{proposition:freg}
Let $K$ be a field of characteristic $p>0$, and $R$ an $\NN$-graded normal ring which is finitely generated over $R_0=K$. Let $\omega$ denote the graded canonical module of $R$, and set $d=\dim R$.

Suppose $R$ is F-regular. Then, for each integer $k$, there exists $q=p^e$ such that 
\[
\rank_KR_k\le\rank_K{[H^d_\frakm(\omega^{(q)})]}_k\,.
\]
\end{proposition}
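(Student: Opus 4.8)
The plan is to produce, for a suitable $q=p^e$, an injective $K$-linear map $R_k\hookrightarrow{[H^d_\frakm(\omega^{(q)})]}_k$; counting $K$-ranks then gives the asserted inequality. Two reductions set this up. First, $R$ is Cohen-Macaulay (an F-regular ring is F-rational, hence Cohen-Macaulay), so graded local duality identifies ${[H^d_\frakm(\omega^{(q)})]}_k$ with the $K$-dual of ${[\Hom_R(\omega^{(q)},\omega)]}_{-k}$. Since $R$ is a normal domain and $\omega^{(q)}$ is the reflexive power $R(qK_R)$ attached to the canonical class, one has $\Hom_R(\omega^{(q)},\omega)\cong\omega^{(1-q)}=R((1-q)K_R)$; thus it suffices to construct a $K$-bilinear pairing
\[
R_k\times{[\omega^{(1-q)}]}_{-k}\to K
\]
that is nondegenerate in the first variable. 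Second, since $R$ is F-regular it is strongly F-regular (standard in this graded setting, and in the intended application $R$ is a direct summand of a strongly F-regular hypersurface), so $R$ is F-split and for every nonzero homogeneous $c$ there is an $e$ for which $R\to F^e_*R$, $1\mapsto F^e_*c$, splits.

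The pairing comes from the classical identification, valid for a normal domain, $\Hom_R(F^e_*R,R)\cong F^e_*R((1-q)K_R)=F^e_*\omega^{(1-q)}$: a homogeneous $\beta\in{[\omega^{(1-q)}]}_{-k}$ is the same datum as a $p^{-e}$-linear map $F^e_*R\to R$ of the appropriate degree, and evaluation gives $(c,\beta)\mapsto\beta(F^e_*c)$, whose value lies in $R_0=K$ by a degree count ($F^e_*c$ has degree $k/q$ and $\beta$ has degree $-k/q$). Strong F-regularity says precisely that for each nonzero $c\in R_k$ there exist $e$ and a splitting $\beta$ with $\beta(F^e_*c)=1$; in other words $c$ does not lie in the left kernel $N_e$ of the pairing associated to that $e$. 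Consequently $\bigcap_e N_e=0$, where each $N_e$ is a $K$-subspace of the finite-dimensional space $R_k$.

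It remains to select a single exponent $q=p^e$ for which $N_e=0$. Once such an $e$ is found, the induced map $R_k\to\Hom_K({[\omega^{(1-q)}]}_{-k},K)$ is injective, and composing with the duality isomorphism $\Hom_K({[\omega^{(1-q)}]}_{-k},K)\cong{[H^d_\frakm(\omega^{(q)})]}_k$ yields $\rank_K R_k\le\rank_K{[H^d_\frakm(\omega^{(q)})]}_k$, as desired. I would obtain the uniform $e$ from a test-element argument: fixing a homogeneous test element $c_0$, strong F-regularity provides, for each nonzero $c$, a level $e$ and a map carrying $F^e_*c$ to $c_0$, and the finite generation of the relevant module of $p^{-e}$-linear maps forces these levels to be bounded on the finite-dimensional piece $R_k$.

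The main obstacle is exactly this last passage, from the pointwise splitting furnished by strong F-regularity (which a priori supplies an exponent $e(c)$ depending on $c$, and hence only shows $\bigcap_e N_e=0$) to a single $e$ with $N_e=0$ for the whole graded component $R_k$; the subspaces $N_e$ need not be nested, so descending-chain reasoning is not available directly and one must invoke a uniform test exponent. The remaining points—the identification $\Hom_R(F^e_*R,R)\cong F^e_*\omega^{(1-q)}$ with its grading, and the duality computation of ${[H^d_\frakm(\omega^{(q)})]}_k$—are formal, the latter being a routine application of graded local duality over the Cohen-Macaulay ring $R$.
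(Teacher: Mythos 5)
Your framework is sound and is, in effect, the Matlis-dual formulation of the paper's proof: the identifications $\Hom_R(\omega^{(q)},\omega)\cong\omega^{(1-q)}$ and $\rank_K{[H^d_\frakm(\omega^{(q)})]}_k=\rank_K{[\omega^{(1-q)}]}_{-k}$ are fine (the latter even though $\omega^{(q)}$ need not be Cohen--Macaulay, since $H^d_\frakm(-)$ is the top, right-exact local cohomology, so duality applies to any finitely generated module), and strong F-regularity does give $\bigcap_e N_e=0$. The genuine gap is your final step, and your diagnosis of the obstacle is wrong: the subspaces $N_e$ \emph{are} nested. Since $R$ is F-regular it is F-split, and one can choose a homogeneous degree-preserving splitting $\sigma\colon F_*R\to R$ with $\sigma(F_*1)=1$ (project any splitting onto its degree-zero graded component). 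If $\beta\colon F^e_*R\to R$ is homogeneous with $\beta(F^e_*c)=1$, then $\sigma\circ F_*\beta\colon F^{e+1}_*R\to R$ is homogeneous of the correct degree and sends $F^{e+1}_*c\mapsto\sigma(F_*1)=1$; hence $c\notin N_e$ implies $c\notin N_{e+1}$, i.e.\ $N_{e+1}\subseteq N_e$. Combined with $\bigcap_e N_e=0$ and $\rank_K R_k<\infty$, this forces $N_e=0$ for $e\gg0$, and no uniform test exponent is needed. As written, your appeal to uniform test exponents is not merely unnecessary but unjustified: their existence is not an available off-the-shelf tool (it is bound up with the localization problem for tight closure), and ``finite generation of the module of $p^{-e}$-linear maps'' does not bound the levels $e(c)$ across $R_k$.

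This nesting trick is exactly the mechanism in the paper, carried out on the local cohomology side: there one fixes a socle generator $\xi\in{[H^d_\frakm(\omega)]}_0$, sets $V_e=\ker\big(R_k\to{[H^d_\frakm(\omega^{(p^e)})]}_k\big)$ with $c\mapsto cF^e(\xi)$, and uses F-purity (injectivity of the Frobenius action on local cohomology) to get $V_{e+1}\subseteq V_e$; F-regularity gives $\bigcap_e V_e=0$, and finite rank of $R_k$ finishes. One further caveat about your route: the identification $\Hom_R(F^e_*R,R)\cong F^e_*\omega^{(1-q)}$ and the splitting characterization of strong F-regularity require $R$ to be F-finite, which is not among the hypotheses ($K$ is an arbitrary field of characteristic $p$); the paper's formulation via Frobenius acting on $H^d_\frakm$ avoids this, so if you keep the splitting formulation you should either reduce to the F-finite case (e.g.\ by base change) or add that hypothesis explicitly.
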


\begin{proof}
If $d\le 1$, then $R$ is regular and the assertion is elementary. Assume $d\ge2$. Let $\xi\in[H^d_\frakm(\omega)]_0$ be an element which generates the socle of $H^d_\frakm(\omega)$. Since the map $\omega^{[q]}\to\omega^{(q)}$ is an isomorphism in codimension one, $F^e(\xi)$ may be viewed as an element of $H^d_\frakm(\omega^{(q)})$ as in \cite{KW:dim2}.

Fix an integer $k$. For each $e\in\NN$, set $V_e$ to be the kernel of the vector space homomorphism
\begin{equation}
\label{eqn:socle}
R_k\to{\big[H^d_\frakm\big(\omega^{({p^e})}\big)\big]}_k\,,\qquad\text{ where }c \mapsto cF^e(\xi)\,.
\end{equation}
If $cF^{e+1}(\xi)=0$, then $F(cF^e(\xi))=c^pF^{e+1}(\xi)=0$; since $R$ is F-pure, it follows that $cF^e(\xi)=0$. Consequently the vector spaces $V_e$ form a descending sequence 
\[
V_1\supseteq V_2\supseteq V_3\supseteq\cdots\,.
\]
The hypothesis that $R$ is F-regular implies $\bigcap_eV_e=0$. Since each $V_e$ has finite rank, $V_e=0$ for $e\gg0$. Hence the homomorphism~\eqref{eqn:socle} is injective for $e\gg0$.
\end{proof}

We next record tight closure properties of general $\NN$-graded hypersurfaces. The results for F-purity are essentially worked out in \cite{HRpure}.

\begin{theorem}
\label{thm:general-graded}
Let $A=K[x_1,\dots,x_m]$ be a polynomial ring over a field $K$ of positive characteristic. Let $d$ be a nonnegative integer, and set $M=\binom{d+m-1}{d}-1$. Consider the affine space $\AA_K^M$ parameterizing the degree $d$ forms in $A$ in which $x_1^d$ occurs with coefficient $1$. 

Let $U$ be the subset of $\AA_K^M$ corresponding to the forms $f$ for which $A/fA$ F-pure. Then $U$ is a Zariski open set, and it is nonempty if and only if $d\le m$.

Let $V$ be the set corresponding to forms $f$ for which $A/fA$ is F-regular. Then $V$ contains a nonempty Zariski open set if $d<m$, and is empty otherwise.
\end{theorem}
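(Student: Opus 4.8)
The plan is to handle F-purity and F-regularity separately, using Fedder's criterion as the main computational tool for F-purity and reducing the deeper F-regularity statement to the vanishing of the tight closure of zero in top local cohomology.

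First I would dispose of the F-purity claim. By Fedder's criterion, $A/fA$ is F-pure if and only if $f^{p-1}\notin\frakm^{[p]}$, where $\frakm=(x_1,\dots,x_m)$ and $\frakm^{[p]}=(x_1^p,\dots,x_m^p)$; equivalently, if and only if the monomial expansion of $f^{p-1}$ contains some monomial all of whose exponents are at most $p-1$. The coefficient of each such ``admissible'' monomial in $f^{p-1}$ is a polynomial in the coordinates of $\AA_K^M$, so $U$ is the union of the corresponding basic open sets and is therefore Zariski open. An admissible monomial has degree at most $m(p-1)$, whereas $f^{p-1}$ is homogeneous of degree $d(p-1)$; hence none can occur when $d>m$, giving $U=\emptyset$ in that range. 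When $2\le d\le m$, the form $f=x_1^d+x_1x_2\cdots x_d$ lies in $\AA_K^M$ and $f^{p-1}$ contains $x_1^{p-1}x_2^{p-1}\cdots x_d^{p-1}$ with coefficient $1$, so $U\ne\emptyset$; the case $d=1$ is trivial since then $A/fA$ is regular. This settles the assertion about $U$.

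For $V$, the empty directions are quick. Since F-regular rings are F-pure, $V\subseteq U$, so $V=\emptyset$ when $d>m$. In the boundary case $d=m$, note that $A/fA$ is Gorenstein with $a$-invariant $d-m$; recall that a graded F-rational ring over a field has negative $a$-invariant and that an F-rational Gorenstein ring is F-regular. Thus $a(A/fA)=0$ rules out F-rationality, hence F-regularity, and $V=\emptyset$ for $d=m$ as well; combining, $V=\emptyset$ for all $d\ge m$. For the substantive case $d<m$, I would argue as follows. Because hypersurfaces are Gorenstein, F-regularity is equivalent to F-rationality, which for a Cohen--Macaulay ring is equivalent to the vanishing of the tight closure of zero in $H^{m-1}_\frakm(A/fA)$. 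For a generic $f$ of degree $d\ge 1$, Bertini's theorem shows $\Proj A/fA$ is smooth, so $A/fA$ has an isolated singularity and is F-rational on its punctured spectrum; the tight closure of zero is then concentrated at $\frakm$, and, being a nonzero $F$-stable submodule of the Gorenstein module $H^{m-1}_\frakm(A/fA)$, it would have to contain the one-dimensional socle, which sits in degree $a=d-m$. The key input is the Fedder--Watanabe-type fact that in a graded F-pure ring the tight closure of zero in top local cohomology is concentrated in nonnegative degrees; since $a=d-m<0$, the socle cannot lie in it, forcing it to vanish, so $A/fA$ is F-rational, i.e.\ F-regular. Consequently $V$ contains the intersection of the (nonempty, open) F-pure locus with the (nonempty, open) locus where $\Proj A/fA$ is smooth, and this intersection is a nonempty Zariski open set.

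The hard part will be the structural input that upgrades F-purity to F-regularity in the regime $a<0$ with isolated singularity: namely the degree-concentration of the tight closure of zero for graded F-pure rings, equivalently the Fedder--Watanabe characterization of F-regularity. Everything else is routine: Fedder's criterion, generic smoothness, the $a$-invariant bookkeeping, and the openness obtained simply by intersecting two explicit open loci. Some care is also needed over finite fields, where the nonemptiness of the relevant open sets should be read geometrically, after passing to $\bar K$ if necessary.
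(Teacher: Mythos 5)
Your proposal is correct in outline, but it takes a genuinely different route from the paper on the F-regularity half. For F-purity the two arguments are essentially equivalent in content: you run Fedder's criterion directly (openness from the coefficients of the admissible monomials of $f^{p-1}$, emptiness for $d>m$ by the degree count $d(p-1)>m(p-1)$, nonemptiness via $f=x_1^d+x_1x_2\cdots x_d$), whereas the paper cites Hochster--Roberts for both the openness and the obstruction $d\le m$, and uses the witness $x_1(x_1+x_2)\cdots(x_1+x_d)$ obtained from the square-free monomial by a change of coordinates; your version is more self-contained. For F-regularity the paper stays elementary: it works in the open locus $W$ where $A/fA$ is F-pure and $(A/fA)_{\bar x_1}$ is regular, upgrades a power of $\bar x_1$ to an honest test element using F-purity, observes that $\bar x_1^{d-1}$ generates the socle modulo the parameters $\bar x_2,\dots,\bar x_m$, so that F-regularity becomes the condition $x_1^{(d-1)q+1}\notin(x_2^q,\dots,x_m^q,f)A$ for some $q$ --- open for each fixed $q$, hence open as a union --- and then checks this on the explicit prime-field point $f=x_1^d+x_2\cdots x_{d+1}$ with $q=p$. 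You instead deduce F-regularity of \emph{every} F-pure $f$ with smooth $\Proj A/fA$ from a Fedder--Watanabe-type theorem (F-injective $+$ F-rational punctured spectrum $+$ negative $a$-invariant $\Rightarrow$ F-rational, hence F-regular in the Gorenstein case) together with characteristic-$p$ Bertini. Your route buys a cleaner conceptual statement and a larger visibly-open locus inside $V$; the paper's route buys explicit $K$-rational witnesses (no finite-field or $\bar K$ caveats) and avoids the deeper structural theorem, at the cost of a hands-on Frobenius colon computation. Do note that your Bertini step still needs the existence of a smooth degree-$d$ hypersurface meeting the chart where $x_1^d$ has coefficient $1$, which holds in every characteristic but deserves a word.

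One statement you quote as the ``key input'' is false as literally phrased: it is \emph{not} true that in an arbitrary graded F-pure ring the tight closure of zero in top local cohomology is concentrated in nonnegative degrees. For instance, take $S=K[x,y,z]/(x^3+y^3+z^3)$ with $p\equiv 1 \bmod 3$ and $R=S[t]$: this is F-pure, and $0^*_{H^3_\frakm(R)}$ contains the image of $(\text{socle of }H^2(S))\otimes t^{-j}$ for every $j\ge 1$, hence has nonzero components in all negative degrees. What is true --- and what your argument actually uses --- is the isolated-singularity version: since $A/fA$ is F-rational on the punctured spectrum, $0^*$ in $H^{m-1}_\frakm(A/fA)$ has finite length (this needs the standard test-element/localization fact, which you should cite alongside Fedder--Watanabe), and F-purity makes Frobenius act injectively, so a nonzero homogeneous $v\in(0^*)_n$ with $n\neq 0$ would produce nonzero elements $F^e(v)\in(0^*)_{np^e}$ in infinitely many distinct degrees, contradicting finite length; applied to the socle in degree $a=d-m<0$ this forces $0^*=0$. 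With the hypothesis stated correctly, your proof is complete, and it indeed reproduces the Fedder--Watanabe theorem you name as the hard step.
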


\begin{proof}
The set $U$ is Zariski open by \cite[page~156]{HRpure} and it is empty if $d>m$ by \cite[Proposition~5.18]{HRpure}. If $d\le m$, the square-free monomial $x_1 \cdots x_d$ defines an F-pure hypersurface $A/(x_1 \cdots x_d)$. A linear change of variables yields the polynomial
\[
f=x_1(x_1+x_2)\cdots(x_1+x_d)
\]
in which $x_1^d$ occurs with coefficient $1$. Hence $U$ is nonempty for $d\le m$.

If $d\ge m$, then $A/fA$ has $a$-invariant $d-m\ge 0$ so $A/fA$ is not F-regular. Suppose $d<m$. Consider the set $W\subseteq\AA_K^M$ parameterizing the forms $f$ for which $A/fA$ is F-pure and $(A/fA)_{\bar{x}_1}$ is regular; $W$ is a nonempty open subset of $\AA_K^M$. Let $f$ correspond to a point of $W$. The element $\bar{x}_1\in A/fA$ has a power which is a test element; since $A/fA$ is F-pure, it follows that $\bar{x}_1$ is a test element. Note that $\bar{x}_2,\dots,\bar{x}_m$ is a homogeneous system of parameters for $A/fA$ and that $\bar{x}_1^{d-1}$ generates the socle modulo $(\bar{x}_2,\dots,\bar{x}_m)$. Hence the ring $A/fA$ is F-regular if and only if there exists a power $q$ of the prime characteristic $p$ such that 
\[
x_1^{(d-1)q+1}\notin(x_2^q,\dots,x_m^q,f)A\,.
\]
The set of such $f$ corresponds to an open subset of $W$; it remains to verify that this subset is nonempty. For this, consider
\[
f=x_1^d+x_2\cdots x_{d+1}\,,
\]
which corresponds to a point of $W$, and note that $A/fA$ is F-regular since
\[
x_1^{(d-1)p+1}\notin(x_2^p,\dots,x_m^p,f)A\,.\qedhere
\]
\end{proof}

These ideas carry over to multi-graded hypersurfaces; we restrict below to the bigraded case. The set of forms in $K[x_1,\dots,x_m,y_1,\dots,y_n]$ of degree $(d,e)$ in which $x_1^dy_1^e$ occurs with coefficient $1$ is parametrized by the affine space $\AA_K^N$ where $N=\binom{d+m-1}{d}\binom{e+n-1}{e}-1$.

\begin{theorem}
\label{thm:general-bigraded}
Let $B=K[x_1,\dots,x_m,y_1,\dots,y_n]$ be a polynomial ring over a field $K$ of positive characteristic. Consider the $\NN^2$-grading on $B$ with $\deg x_i=(1,0)$ and $\deg y_j=(0,1)$. Let $d,e$ be nonnegative integers, and consider the affine space $\AA_K^N$ parameterizing forms of degree $(d,e)$ in which $x_1^dy_1^e$ occurs with coefficient $1$. 

Let $U$ be the subset of $\AA_K^N$ corresponding to forms $f$ for which $B/fB$ is F-pure. Then $U$ is a Zariski open set, and it is nonempty if and only if $d\le m$ and $e\le n$.

Let $V$ be the set corresponding to forms $f$ for which $B/fB$ is F-regular. Then $V$ contains a nonempty Zariski open set if $d<m$ and $e<n$, and is empty otherwise.
\end{theorem}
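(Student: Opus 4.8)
The plan is to adapt the proof of Theorem~\ref{thm:general-graded} to the bigraded setting, exploiting two simplifications. First, F-purity of $B/fB$ is detected by a single membership via Fedder's criterion, so the dichotomy at $d\le m,\ e\le n$ becomes a degree count. Second, since $B/fB$ is a hypersurface and hence Gorenstein, F-regularity coincides with F-rationality by part~(4) of the theorem on F-regular rings above; this lets me reduce the harder statement about $V$ to computations with a single parameter ideal. Throughout, write $\frakM=(x_1,\dots,x_m,y_1,\dots,y_n)$.

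For the statement about $U$: by Fedder's criterion (cf.\ the proof of Theorem~\ref{thm:general-graded}), $B/fB$ is F-pure precisely when $f^{p-1}\notin\frakM^{[p]}=(x_1^p,\dots,x_m^p,y_1^p,\dots,y_n^p)$, which is a Zariski-open condition on the coefficients of $f$, so $U$ is open. A monomial $\bsx^\bsa\bsy^\bsb$ occurring in $f^{p-1}$ avoids $\frakM^{[p]}$ only if every exponent is at most $p-1$; as $f^{p-1}$ has bidegree $((p-1)d,(p-1)e)$, its $x$-exponents sum to $(p-1)d$ and its $y$-exponents to $(p-1)e$, forcing $d\le m$ and $e\le n$. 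Hence $U=\emptyset$ unless $d\le m$ and $e\le n$. Conversely, when $d\le m$ and $e\le n$ the squarefree monomial $x_1\cdots x_d\,y_1\cdots y_e$ defines an F-pure hypersurface, and a bidegree-preserving linear change of variables in the $x_i$ and in the $y_j$ separately, exactly as in Theorem~\ref{thm:general-graded}, produces an $f$ in which $x_1^dy_1^e$ has coefficient $1$; so $U\neq\emptyset$.

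For the emptiness half of the statement about $V$, suppose $B/fB$ is F-regular and $d\ge m$ (the case $e\ge n$ is symmetric). Since $d\ge m\ge2$, the subring $K[y_1,\dots,y_n]$ injects into $R=B/fB$, and inverting its nonzero elements gives a localization $W^{-1}R=L[x_1,\dots,x_m]/(\bar f)$, where $L=K(y_1,\dots,y_n)$ and $\bar f$ is the image of $f$, a nonzero form of degree $d$ in the $x_i$ over $L$ (its $x_1^d$-coefficient equals $y_1^e\neq0$). A localization of an F-regular ring is weakly F-regular, hence F-rational; as $W^{-1}R$ is Gorenstein, part~(4) would then make it F-regular. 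But a standard graded hypersurface of degree $d\ge m$ over a field has $a$-invariant $d-m\ge0$ and so is not F-regular, as in the proof of Theorem~\ref{thm:general-graded}. This contradiction forces $d<m$, and symmetrically $e<n$; thus $V=\emptyset$ unless $d<m$ and $e<n$.

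It remains to show $V$ contains a nonempty open set when $d<m$ and $e<n$. Using again that $B/fB$ is F-regular iff F-rational, I would work inside the open set $W$ of $f$ for which $B/fB$ is F-pure and $(B/fB)_{\overline{x_1y_1}}$ is regular; for such $f$ the element $\overline{x_1y_1}$ is a test element (a power is a test element by regularity off $\{x_1y_1=0\}$, and F-purity promotes it to a test element, as in Theorem~\ref{thm:general-graded}). Next, $\bar x_2,\dots,\bar x_m,\bar y_2,\dots,\bar y_n,\overline{x_1-y_1}$ is a homogeneous system of parameters for $B/fB$: modding out the first $m+n-2$ yields $K[x_1,y_1]/(x_1^dy_1^e)$, and adjoining $x_1-y_1$ gives $K[x_1]/(x_1^{d+e})$, whose socle is spanned by $\bar x_1^{d+e-1}$. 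Hence $B/fB$ is F-rational iff there is a power $q$ of $p$ with
\[
x_1y_1\cdot x_1^{q(d+e-1)}\notin(x_2^q,\dots,x_m^q,y_2^q,\dots,y_n^q,(x_1-y_1)^q,f)\,,
\]
a Zariski-open condition on $f$. To exhibit a point of this locus I would take the explicit polynomial
\[
f=x_1^dy_1^e+x_2\cdots x_{d+1}\,y_2\cdots y_{e+1}\,,
\]
which is available since $d<m$ and $e<n$, is F-pure by the monomial computation above, and lies in $W$ (a Jacobian check shows no singular point with $x_1y_1\neq0$, in every characteristic). The tight-closure verification in this final step — confirming the displayed non-containment for this $f$ and a suitable $q$ — is the part I expect to be the main obstacle, and the choice of system of parameters or of the explicit $f$ may need adjustment to keep the monomial bookkeeping manageable.
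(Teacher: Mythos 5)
Your proposal is correct, and in its main (existence) half it coincides with the paper's proof: the same open locus $W$ of F-pure hypersurfaces with $(B/fB)_{\bar{x}_1\bar{y}_1}$ regular, the same test element $\bar{x}_1\bar{y}_1$, the same system of parameters $x_1-y_1,x_2,\dots,x_m,y_2,\dots,y_n$ with socle generator $\bar{x}_1^{d+e-1}$, and the same witness $f=x_1^dy_1^e+x_2\cdots x_{d+1}y_2\cdots y_{e+1}$. Your displayed non-membership $x_1y_1\cdot x_1^{q(d+e-1)}\notin(x_2^q,\dots,x_m^q,y_2^q,\dots,y_n^q,(x_1-y_1)^q,f)$ is the literal $cu^q\notin I^{[q]}$ criterion and differs only cosmetically from the paper's $x_1^{(d+e-1)q+1}\notin(x_1^q-y_1^q,x_2^q,\dots,y_n^q,f)B$. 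You also supply two checks the paper leaves implicit: that the quotient by the parameters is $K[x_1]/(x_1^{d+e})$ (which uses bihomogeneity plus the normalized coefficient of $x_1^dy_1^e$), and that the witness is smooth off $x_1y_1=0$ in every characteristic.

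Where you genuinely diverge: (i) for $U$ you invoke Fedder's criterion $f^{p-1}\notin\frakM^{[p]}$ with an exponent count, where the paper transplants the Hochster--Roberts argument from Theorem~\ref{thm:general-graded} (openness and the obstruction for large degree are taken from \cite{HRpure}); your route is self-contained and yields the necessity of $d\le m$ and $e\le n$ in one line, with the small caveat that Fedder certifies F-purity of $B_\frakM/fB_\frakM$, which suffices because F-purity of a graded ring is detected at the homogeneous maximal ideal; (ii) for the emptiness of $V$ the paper simply asserts $\bsa(B/fB)<\bszero$ for an F-regular bigraded ring, whereas you localize at $K[y_1,\dots,y_n]\smallsetminus\{0\}$ and reduce to the single-graded $a$-invariant fact already used in Theorem~\ref{thm:general-graded}; this is more elementary and avoids the unproved multigraded statement. (Your aside ``$d\ge m\ge2$'' is neither assumed nor needed: $d\ge m\ge1$ suffices for the injection $K[\bsy]\hookrightarrow B/fB$, and if $m\le1$ the localization is visibly not even reduced.) Finally, the step you flag as the main obstacle --- verifying the non-membership for the explicit $f$ --- is exactly where the paper, too, stops: it says only ``use $f=x_1^dy_1^e+x_2\cdots x_{d+1}y_2\cdots y_{e+1}$,'' in parallel with the graded case where $q=p$ is the stated choice. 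So this is an omitted computation left at the same level of detail as the source (and $q=p$ is the right value to try), not a conceptual gap in your plan.
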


\begin{proof}
The argument for F-purity is similar to the proof of Theorem~\ref{thm:general-graded}; if $d\le m$ and $e\le n$, then the polynomial $x_1\cdots x_dy_1\cdots y_e$ defines an F-pure hypersurface.

If $B/fB$ is F-regular, then $\bsa(B/fB)<\bszero$ implies $d<m$ and $e<n$. Conversely, if $d<m$ and $e<n$, then there is a nonempty open set $W$ corresponding to forms $f$ for which the hypersurface $B/fB$ is F-pure and $(B/fB)_{\bar{x}_1\bar{y}_1}$ is regular. In this case, $\bar{x}_1\bar{y}_1\in B/fB$ is a test element. The socle modulo the parameter ideal $(x_1-y_1,x_2,\dots,x_m,y_2,\dots,y_n)B/fB$ is generated by $\bar{x}_1^{d+e-1}$, so $B/fB$ is F-regular if and only if there exists a power $q=p^e$ such that
\[
x_1^{(d+e-1)q+1}\notin(x_1^q-y_1^q,x_2^q,\dots,x_m^q,y_2^q,\dots,y_n^q,f)B\,.
\]
The subset of $W$ corresponding to such $f$ is open; it remains to verify that it is nonempty. For this, use $f=x_1^dy_1^e+x_2\cdots x_{d+1}y_2\cdots y_{e+1}$.
\end{proof}

\bibliographystyle{amsalpha}

\end{document}